\documentclass[11pt]{amsart}

\usepackage[english]{babel}

\usepackage{color}
\usepackage{array}
\usepackage{enumerate}
\usepackage{enumitem}
\usepackage{refcount}
\usepackage{algorithm}
\usepackage{algorithmic}
\usepackage{boxedminipage}
\usepackage{float}
\usepackage[utf8x]{inputenc}
\usepackage{soul}

\usepackage{parskip}

\usepackage[colorlinks]{hyperref}
\hypersetup{
	colorlinks,
	linkcolor={red!50!black},
	citecolor={green!50!black},
	urlcolor={blue!80!black}
}
\usepackage[nameinlink,capitalize]{cleveref}

\usepackage{color}
\usepackage{graphicx}
\usepackage{multirow}
\usepackage{tikz}
\usepackage{amsmath}
\usepackage{amssymb}
\usepackage{caption}
\usepackage{tikz-cd}

\setlength{\textheight}{9truein}
\setlength{\textwidth}{6.5truein}
\setlength{\evensidemargin}{0truein}
\setlength{\oddsidemargin}{0truein}
\setlength{\topmargin}{-.5truein}

\newtheorem{thm}{Theorem}[section]
\newtheorem{prop}[thm]{Proposition}
\newtheorem{lem}[thm]{Lemma}
\newtheorem{cor}[thm]{Corollary}

\theoremstyle{definition}
\newtheorem{defn}[thm]{Definition}

\newtheorem{exa}[thm]{Example}

\newtheorem{remark}[thm]{Remark}

\def\proof{{\bf Proof.}}

\newcommand{\bA}{\mathcal A}
\newcommand{\bB}{\mathcal B}
\newcommand{\bC}{\mathcal C} 

\newcommand{\bO}{\mathcal O}
\newcommand{\bL}{\mathcal L}

\newcommand{\bZ}{\mathcal Z}

\newcommand{\bI}{\mathcal I}

\DeclareMathOperator{\Hilb}{Hilb}
\DeclareMathOperator{\im}{Im}
\DeclareMathOperator{\reg}{reg}

\newcommand{\prfend}{\hbox to7pt{\hfil}
	
	\par\vskip-\baselineskip\hbox to\hsize
	{\hfil\vbox {\hrule width6pt height6pt}}\vskip\baselineskip}

\def\dd{\medskip \par \noindent}
\long\def\eatit#1{}

\def\Z{\mathbb{Z}}
\def\N{\mathbb{N}}
\def\C{\mathbb{C}}
\def\A{\mathbb{A}}
\def\P{\mathbb{P}}

\def \"{``}

\font\tengothic=eufm10
\font\sevengothic=eufm7
\newfam\gothicfam
\textfont\gothicfam=\tengothic
\scriptfont\gothicfam=\sevengothic
\def\goth#1{{\fam\gothicfam #1}}


\newcommand{\gm}{\goth m}


\usepackage{xypic}


\DeclareMathOperator{\Res}{Res}


\begin{document}
		
	\author[E. Ballico]{Edoardo Ballico}
	\address[E. Ballico]{Dipartimento di Matematica, Universit\`a di Trento, Via Sommarive, 14 - 38123 Povo (Trento), Italy}
	\email{edoardo.ballico@unitn.it}
	
	\title{Postulation of schemes of length at most 4 on surfaces}
	\author[S. Canino]{Stefano Canino}
	\address[S. Canino]{Wydział Matematyki, Informatyki i Mechaniki, Uniwersytet Warszawski, ul.~Stefana Banacha	2, 02-097 Warsaw, Poland.}
	\email{s.canino@uw.edu.pl}
	
	\maketitle


	\begin{abstract}
	In this paper we address the postulation problem of zero-dimensional schemes on a surface of length at most 4. We prove some general results and then we focus on the case of $\P^2$, $\P^1\times\P^1$ and  Hirzebruch surfarces. In particular, we prove that except for few well-known exceptions, a general union of schemes of length at most 4 has always good postulation in $\P^2$ and in $\P^1\times\P^1$.
	\end{abstract}
	
	
	\section{Introduction}
	In \cite{ccgi} the authors considered an interesting class of zero-dimensional schemes of $\P^n$ which they called {\it m-symmetric schemes}. More specifically, $X\subset \P^n$ is an $m$-symmetric scheme if it is supported at one point $P\in\P^n$ and $\ell(L\cap X)=m$ for any line $L$ through $P$, where $\ell(L\cap X)$ denotes the length of $L\cap X$. Among all the $m$-symmetric schemes, they studied in detail the specific class of 2-squares, i.e. zero-dimensional schemes of $\P^2$ which are isomorphic to the scheme defined by the intersection of two double lines and they showed that, except for double points, 2-squares are the only 2-symmetric schemes of $\P^2$. Moreover, despite being interesting per se, these schemes  have some interesting connections with the Jacobian schemes of plane algebraic curves, which indeed were the starting point for their definition, see \cite{cgi}, and with symmetric and partially symmetric tensors, see \cite{ccgi}. Successively, the postulation of 2-squares in $\P^2$ has been studied in \cite{ccgio}, where it is proved that a general union of 2-squares in $\P^2$ has always good postulation.
	
	Here, as in \cite{ccgio}, we consider zero-dimensional schemes embedded with an analytic isomorphism, i.e. we consider finite colength ideals $I\subset \C[[x_1,\dots ,x_n]]$ or $I\subset \C\{x_1,\dots x_n\}$; see \cite{ber}. The case of zero-dimensional schemes contained in a smooth surface, case $n=2$, is quite special and it has a very different flavour. If we fix the colength, $z$, i.e. the length of the associated zero-dimensional scheme, the set $\text{Hilb}^t\C\{x_1,x_2\}$ is an algebraic set whose reduction is irreducible and of dimension $t-1$, see \cite{bri,g1,g, iar}. Its singular locus is described in \cite{g1}. For $t\le 6$, there is a complete classification of its isomorphism classes with the description of all connected zero-dimensional schemes of length at most $6$, see \cite{bri} Chapter IV. In \cite{bri,iar} there is a huge number of other results, e.g. the irreducibility of the strata with fixed Hilbert-Samuel function and the dimension of these sets (\cite[Ch. III]{bri}). 
	
	Our pourpose in this paper is to study the postulation of unions of schemes of length less or equal then 4 on integral surfaces. As stated in \cite{bri}, an open dense set of $\text{Hilb}^t\C\{x_1,x_2\}$ is formed by the so-called curvilinear schemes, i.e. the ones supported at one point and contained in a smooth curve. However, for our pourposes, handling these schemes is trivial, due to Remark \ref{cm}, which uses \cite{cm}, thus we want to consider also schemes which are not in this open dense set. 
	
	Recall that an $m$-fat point is a zero-dimensional scheme defined by the $m$\textsuperscript{th} power of an ideal defining a point and an $m$-jet is a zero-dimensional scheme of length $m$ supported at one point and contained in a line; see Section \ref{sec:preliminaries} for more details. As proved in \cite{bri} Chapter IV, the only element of $\Hilb^1\{x,y\}$ is a  simple point,  the elements of $\Hilb^2\{x,y\}$ are only 2-jets and the elements of $\Hilb^3\{x,y\}$ are only length 3 curvilinear schemes and the double point. In $\Hilb^4\{x,y\}$ the situation is more interesting: indeed, there are length 4 curvilinear schemes and two other types of schemes, which we call {\it 2-squares} and {\it tiles}. A 2-square of an integral surface $M$, is a zero-dimensional subscheme $X\subset M$ which is isomorphic to a 2-square of $\A^2$ or, equivalentely, it is supported at $P\in M$ and for any curve $\bC\subset M$ passing through $P$ it holds that $\ell(\bC\cap X)=2$; note that, by \cite{ccgi} Proposition 2.17, this definition of 2-square and the definition given in \cite{ccgi} coincide in $\P^2$. A tile of $M$ is instead a zero-dimensional subscheme $Y\subset M$ which is isomorphic to the scheme of $\A^2$ defined by the ideal $(x_2^2,x_1x_2,x_1^3)$ or, equivalentely, it is supported at $P\in M$, it is not curvilinear and there is a curve $\bC\subset M$ passing through $P$ such that $\ell(\bC\cap Y)=3$; see Section \ref{sec:preliminaries} for more details.
	
	The set of length $4$ connected schemes containing the double point $2P$ and contained in the triple point $3P$ is exactly the set of tiles and 2-squares supported at $P$ and it is an irreducible variety of dimension $2$. Its irreducibility is a particular case of the irreducibility of the stratum of connected zero-dimensional schemes with fixed Hilbert-Samuel function. For this reason, it makes sense to deal with a general union of length 4 curvilinear schemes, tiles and 2-squares. Moreover, by the same argument, we can also deal with general unions of the previous mentioned schemes, double points and curvilinear schemes.
	
	As we said, we want to study the postulation of general unions of low length zero-dimensional schemes on surfaces. First of all, in Section \ref{sec:preliminaries} we recall some results and we prove some lemmata and remarks that allow to considerably simplify the proofs of the paper. The most important of these is Lemma \ref{2p3p}, which state that if $P$ is a point of an integral projective surface $M$, then any generic 2-square supported at $P$ imposes one condition more then $2P$ at any linear system of divisors of $M$. After that, we prove some general results for the postulation of our schemes on surfaces. In particular, given an integral non-degenerate surface $M\subset \P^n$, we show that if $\sigma_r(M)$, the $r$\textsuperscript{th} secant variety of $M$, is non defective, then the dimension of the liner span of a general union of 2-squares of $M$ has the expected dimension (see Corollary \ref{2squares}). As a consequence of that, by using some results of \cite{laf}, we are able to prove that, in many cases, a general union of 2-squares on an Hirzebruch surface has good postulation (see Proposition \ref{i4}).
	
	In Section \ref{sec:p2} we prove, in Corollary \ref{p2<4}, that a general union of any type of zero-dimensional schemes of length less or equal then 4, i.e. simple points, double points, 2-squares, tiles and curvilinear schemes of length at most 4, has good postulation, except for the well-known cases of two or five double points. Our strategy is the following: first we prove that the statement is true for a general union of tiles (Theorem \ref{tiles}), then we prove that it is true for a general union of double points and tiles (Theorem \ref{fattiles}), and finally we get our result thanks to Lemma \ref{2p3p} and the result of \cite{cm} stated in Remark \ref{cm}. In Section \ref{sec:p1p1} we prove an analogous result, Corollary \ref{p1p1<4}, in $\P^1\times\P^1$ by using the same strategy. Both in the case of $\P^2$ and in that of $\P^1\times\P^1$, we also characterise some sets of tiles having bad postulation.
	
	In our proofs we use the so-called Horace method (\cite{hir}), an inductive method which uses the residual exact sequence defined in Remark \ref{horace} and  several fine tuning to do its inductive steps. We manage to avoid using degenerations of several small zero-dimensional schemes to a single bigger scheme, which are an often used powerful tool in every dimension (\cite{ga,go}), but are unnecesary in our case.
	
	We always study the postulation of the general union of a prescribed number of certain schemes, i.e. by the semicontinuity theorem the \lq\lq best\rq\rq\, postulation. In this paper, we also briefly consider the case of the \lq\lq worst\rq\rq\, postulation for tiles in $\P^2$ or in $\P^1\times \P^1$ and we construct examples with a prescribed index of regularity in a certain interval (see Examples \ref{new1} and \ref{new3} and Propositions \ref{new2} and \ref{new2.0}). We do not consider schemes which are adapted to the ambient surface $X$. For instance, for $X=\P^1\times \P^1$ we do not consider general unions of the very particular 2-squares which are the product of a length $2$ scheme of the first factor of $\P^1\times \P^1$ and a length $2$ scheme of the second factors. When $X=\P^2$ we do not consider general unions of schemes whose connected components are of prescribed length and contained in a line, the line being different, in general, for different connected components (their postulation is described in \cite{e}).

	\section{General results on surfaces}\label{sec:preliminaries}
	We work on the field of complex numbers $\C$. Given a variety $M$ and a point $P\in M$ defined by the maximal ideal $\gm$, the $m$-fat point of $M$ supported at $P$ is the closed subscheme $mP\subset M$ defined by the ideal $\gm^m$; a 2-fat point is also called a {\it double point}. We use analogue notation for lines, i.e. if $L\subset M$ is a line, then we denote by $2L$ the corresponding double line.  Given a closed subscheme $X\subset M$ we denote  by $\bI_X$ its ideal sheaf and by $\deg(X)$ its degree. If $X$ is a zero-dimensional scheme, its degree is also called the {\it length of} $X$ and it denoted by $\ell(X)$. We use in $\A^2$ the coordinates $(x_1,x_2)$.\dd
	 We recall now the definition of good postulation of a zero-dimensional scheme.
	\begin{defn}
	Let $M$ an algebraic variety, $\bL$ a line bundle on $M$ and $X\subset M$ a zero-dimensional scheme. We say that $X$ has good postulation with respect to $\bL$ if
	$$h^0(M,\bI_X\otimes\bL)=\min\{0,h^0(M,\bL)-\ell(X)\}.$$
	\end{defn}
	Note that, having good postulation is equivalent to say that 
	$$h^0(M,\bI_X\otimes\bL)\cdot h^1(M,\bI_X\otimes\bL)=0.$$
	We will often use the following remarks and lemma.
\begin{remark}\label{horace}
Let $M$ be an integral projective variety, $X\subset M$ a zero-dimensional scheme and $D\subset M$ an effective Cartier divisor of $M$. The residual scheme $\Res_D(X)$ of $X$ with respect to $D$ is the closed subscheme of $M$ with $\bI_X:\bI_D$ as its ideal sheaf. We have $\Res_D(X)\subseteq X$ and $\ell(X) =\ell(X\cap D) +\ell(\Res_D(X))$. For all the line bundles $\bL$ on $M$ we have an exact sequence
$$
	0\to \bI_{\Res_D(X)}\otimes \bL(-D)\to \bI_X\otimes \bL\to \bI_{X\cap D,D}\otimes \bL_{|D}\to 0
$$
which is called {\it the exact sequence of $X$ with respect to $D$}.
\end{remark}
\begin{remark}\label{rangè}\rm
Let $M$ be an integral projective variety of dimension $n\ge 1$.  Let $\bL$ be a line bundle on $M$ and $V\subseteq H^0(M,\bL)$ a linear subspace. Fix zero-dimensional schemes $Z\subset W$. If $V(-Z)=0$, then $V(-W) =0$. If $\dim V(-W) =\dim V-\deg(W)$, then $\dim V(-Z) =\dim V-\deg(Z)$.
\end{remark}
\begin{remark}\label{cm}
	Let $M$ be an irreducible integral projective variety of dimension $n\ge 1$. For any \linebreak $P\in M_{\reg}$ and any positive integer $t$ the let $\bC(M,P,t)$ the set of all degree $t$ curvilinear subschemes of $M$ supported at $P$. This set $\bC(M,p,t)$ is non-empty and irreducible. For all positive integers $s$, and $t_i$, $1\le i\le s$, let $\bC(M,t_1,\dots ,t_s)$ be the set of all zero-dimensional schemes $A\subset M_{\reg}$ with $s$ connected components, say $A =A_1\cup \cdots \cup A_s$, whose connected components are curvilinear and $\deg(A_i)=t_i$ for all $i$. Since $M_{\reg}$ is irreducible and each $\bC(M,P,t)$ is irreducible, then $\bC(M,t_1,\dots ,t_s)$ is non-empty and irreducible. Hence, it makes sense to consider general unions of $s$ curvilinear schemes on $M$. The short note \cite{cm} may be stated in the following way: let $\bL$ be a line bundle on $M$ and $V\subseteq H^0(M,\bL)$ a linear subspace. Then $\dim V(-Z) =\max \{0,\dim V-t_1-\cdots-t_s\}$ for a general $Z\in \bC(T,t_1,\dots,t_s)$.
\end{remark}
\begin{lem}\label{comlem}
	Let $M$ be an integral projective variety with $\dim M>1$, $\bL$ a line bundle on $M$, $D\subset M$ an integral Cartier divisor, $Z\subset M$ a zero-dimensional scheme and $S\subset D$ a general union of $s$ points on $D$.
	\begin{itemize}[leftmargin=*]
		\item If $h^0(M,\bI_{\Res_D(Z)}\otimes \bL(-D)) \le h^0(M,\bI_Z\otimes \bL)-s$ then 
		$$h^0(M,\bI_{Z\cup S}\otimes \bL) =h^0(M,\bI_Z\otimes \bL)-s.$$
		\item If $h^0(M,\bI_{\Res_D(Z)}\otimes \bL(-D))=0$ then
		$$h^0(M,\bI_{Z\cup S}\otimes \bL)=\max\{0,h^0(M,\bI_Z\otimes \bL)-s\}.$$
	\end{itemize}
\end{lem}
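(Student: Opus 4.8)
The plan is to translate everything into a statement about the linear system $V:=H^0(M,\bI_Z\otimes\bL)$ and to observe that imposing general points of $D$ only affects the part of $V$ that survives restriction to $D$. Write $\rho\colon V\to H^0(D,\bL_{|D})$ for the restriction map. Taking global sections in the exact sequence of Remark \ref{horace} for $Z$ with respect to $D$ identifies $\ker\rho$ with $H^0(M,\bI_{\Res_D(Z)}\otimes\bL(-D))$; set $k:=h^0(M,\bI_{\Res_D(Z)}\otimes\bL(-D))=\dim\ker\rho$ and let $V':=\rho(V)\subseteq H^0(D,\bL_{|D})$, so that $\dim V'=h^0(M,\bI_Z\otimes\bL)-k$. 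In the notation of Remark \ref{rangè} the quantity we must compute is $\dim V(-S)=h^0(M,\bI_{Z\cup S}\otimes\bL)$, where $S$ is the general union of $s$ points of $D$.

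The first key step is that, because every point of $S$ lies on $D$, the condition of vanishing at a point $P\in S$ factors through $\rho$: a section $v\in V$ vanishes at $P$ if and only if $\rho(v)$ does. Since a general choice of $S$ keeps its support off the finite set $\mathrm{Supp}(Z)$, this yields $V(-S)=\rho^{-1}(V'(-S))$ and hence the identity
$$\dim V(-S)=k+\dim V'(-S),$$
reducing the computation of $h^0(M,\bI_{Z\cup S}\otimes\bL)$ to the behaviour of the linear system $V'$ on $D$ under the imposition of $s$ general points.

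The second key step is the classical fact that general points impose independent conditions on a nonzero linear system on an integral projective variety of positive dimension. As $\dim M>1$, the divisor $D$ is integral of dimension $\dim M-1\ge 1$, so for $S$ general on $D$ one gets $\dim V'(-S)=\max\{0,\dim V'-s\}$ by a one-point-at-a-time induction: if the current system is nonzero its base locus is a proper closed subset of $D$, so a further general point drops the dimension by exactly one, and the generic $s$-tuple realises this value. Combining the two steps gives the master formula
$$h^0(M,\bI_{Z\cup S}\otimes\bL)=\dim V(-S)=\max\{k,\ h^0(M,\bI_Z\otimes\bL)-s\}.$$
Both assertions are now immediate: under the hypothesis of the first bullet $k\le h^0(M,\bI_Z\otimes\bL)-s$, so the maximum equals $h^0(M,\bI_Z\otimes\bL)-s$; under the hypothesis of the second bullet $k=0$, so the maximum equals $\max\{0,h^0(M,\bI_Z\otimes\bL)-s\}$. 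I expect the only delicate point to be the clean reduction to $V'$, namely verifying that conditions at points of $D$ are captured entirely by the restricted system and that $\ker\rho$ is exactly $H^0(M,\bI_{\Res_D(Z)}\otimes\bL(-D))$; once this is in place, the independent-conditions input is entirely standard.
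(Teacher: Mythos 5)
Your proof is correct, and it rests on the same underlying mechanism as the paper's, but it is organized differently enough to be worth comparing. The paper argues by induction on $s$: for $s=1$ a general point of $D$ fails to impose a new condition exactly when $D$ lies in the base locus of $\bI_Z\otimes\bL$, which is excluded by the hypothesis on $h^0(M,\bI_{\Res_D(Z)}\otimes\bL(-D))$; the inductive step then uses the identity $\Res_D(Z\cup S')=\Res_D(Z)$ to see that adding points of $D$ to $Z$ does not change the residual, so the hypothesis propagates. You instead restrict once and for all to $D$: the residual exact sequence identifies $\ker\rho$ with $H^0(M,\bI_{\Res_D(Z)}\otimes\bL(-D))$, and the one-point-at-a-time base-locus argument is carried out on the image system $V'$ inside $H^0(D,\bL_{|D})$ on the integral variety $D$ (this is where $\dim D\ge 1$ enters, exactly as in the paper). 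Both proofs ultimately use the same key fact --- a general point off the base locus of a nonzero linear system on an integral variety drops the dimension by exactly one --- but your packaging buys a cleaner, hypothesis-free master formula $h^0(M,\bI_{Z\cup S}\otimes\bL)=\max\{k,\,h^0(M,\bI_Z\otimes\bL)-s\}$ with $k=h^0(M,\bI_{\Res_D(Z)}\otimes\bL(-D))$, from which both bullets are immediate; the paper's formulation keeps the statement in the residual-scheme form that it reuses verbatim in its later Horace-style inductions. The only points that need (and receive) care in your version are that $S$ must avoid $\mathrm{Supp}(Z)$, which is automatic for $S$ general, and that $\bI_{Z\cap D,D}\otimes\bL_{|D}$ is a subsheaf of the line bundle $\bL_{|D}$, so a nonzero subsystem of $V'$ has proper closed base locus in $D$.
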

\proof\, Since $\dim M>1$, $\dim D=\dim M-1\ge 1$. Obviously we have
$$h^0(M,\bI_{Z\cup S}\otimes \bL)\ge \max\{0,h^0(M,\bI_Z\otimes \bL)-s\}$$
and, by general assumption, we have $S\cap Z=\emptyset$.  If $s=1$, then the statement is true because $h^0(M,\bI_{Z\cup S}\otimes \bL) =h^0(M,\bI_Z\otimes \bL)$ if and only if $D$ is contained in the base locus
of $\bI_Z\otimes \bL$. Now, assume $s>1$ and suppose by induction that the result is true any $1\leq s'<s$. Take a general union of $s-1$ points in $D$, say $S'\subset D$. Since $\Res_D(Z\cup S') =\Res_D(Z)$, the inductive assumption
gives $h^0(M,\bI_{Z\cup S'}\otimes \bL)=\max\{0,h^0(M,\bI_Z\otimes \bL)-s+1\}$. Take as $S$ the union of $S'$ and a general point of $D$. Apply the case $s=1$ to the zero-dimensional scheme $Z\cup S'$.\prfend
\begin{remark}\label{comlem0}\rm
	Lemma \ref{comlem} is characteristic free but if we assume to work in characteristic 0 and we take a partition of $s$, say $s_1\geq s_2\geq \dots\geq s_r>0$ and curvilinear schemes of length $s_1,\dots,s_r$ on $D$, then we can apply Remark \ref{cm}. Hence, in characteristic 0 Lemma \ref{comlem} is still true by replacing $s$ general points with $r$ curvilinear schemes on $D$ of total length $s$.
\end{remark}
We recall now from \cite{bri} the classification of length 4 schemes supported at one point of a surface.
\begin{defn}\label{hilb4}\rm
	Let $M$ be an integral projective surface and $P\in M_{\reg}$. A zero-dimensional scheme $X$ supported at $P$ is said to be:
	\begin{itemize}[leftmargin=*]
		\item a {\it curvilinear scheme of length $m$}, if $X$ is isomorphic to the subscheme of $\A^2$ defined by $(x_2,x_1^n)$. Moreover, if $X$ is supported on a line, it is called an {\it $m$-jet};
		\item a {\it 2-square}, if $X$ is isomorphic to the subscheme of $\A^2$ defined by $(x_1^2,x_2^2)$;
		\item a {\it tile}, if $X$ is isomorphic to the subscheme of $\A^2$ defined by $(x_2^2,x_1x_2,x_1^3)$.
	\end{itemize}
	\end{defn}
	\begin{remark}\label{brirem}\rm
	Let $M$ be an integral projective surface and let $\Hilb^4$ be the punctual Hilbert scheme parameterising the zero-dimensional schemes $X$ supported at one point $P\in M_{\reg}$ with $\ell(X)=4$. By \cite{bri} Proposition IV.2.1, every element of $\Hilb^4$ is isomorphic to one (and only one) of the schemes of Definition \ref{hilb4}. Moreover, the dimension of $\Hilb^4$ is 3 and we have that:
	 \begin{itemize}[leftmargin=*]
	 	\item the set of curvilinear schemes is $Z_1\subset\Hilb^4$ and $Z_1$ is open and of dimension 3. Hence, the general element of $\Hilb^4$ is curvilinear;
	 	\item the set of 2-squares and tiles is $Z_2\subset\Hilb^4$ and $Z_2$ has dimension 2;
	 	\item the set of tiles is $K\subset Z_2$ and $K$ is isomorphic to $\P^1$. Hence, the general element of $Z_2$ is a 2-square. 
	 \end{itemize}
	Note that, by \cite{bri} p. 76 and by \cite{ccgi} Proposition 3.1, we can characterise curvilinear schemes, 2-squares and tiles by looking at their intersections with smooth curves. In particular we have that a scheme $X$ supported at $P\in M_{\reg}$ with $\ell(X)=4$ is:
	\begin{itemize}[leftmargin=*]
		\item a curvilinear scheme, if there exists a smooth curve $\bC$ such that $\ell(\bC\cap X)=4$, i.e. $X\subset \bC$;
		\item a 2-square, if $\ell(\bC\cap X)=2$ for any smooth curve passing through $P$;
		\item a tile, if there exist a smooth curve $\bC$ such that $\ell(\bC\cap X)=3$, and we call such a curve a {\it long side of $X$}.
	\end{itemize}
	Note that if $X\subset \P^2$ or $X\subset \P^1\times\P^1$, there exists a line that is a long side of $X$.
	\end{remark}
	Given a point $P\in M$ we denote by $\bZ(M,P)$ the set of all the zero-dimensional schemes $X$ such that $2P\subset X\subset 3P$ and $\ell(X)=4$. Note that, up to isomorphism, $\bZ(M,P)=Z_2$, thus its generic element is a 2-square.\dd
	The following lemma will be essential to simplify our proofs on 2-squares.
	\begin{lem}\label{2p3p}
	Let $M$ be an integral projective surface, $P\in M_{\reg}$, $\bL$ a line bundle on $M$ and \linebreak $V\subseteq H^0(X,\bL)$ a vector subspace. If $X\in\bZ(M,P)$ is a general 2-square then $$\dim V(-X)=\max\{0,\dim V(-2P)-1\}.$$
	\end{lem}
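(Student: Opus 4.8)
The plan is to reduce the statement, via upper semicontinuity, to a transversality computation on the three-dimensional vector space $\gm^2/\gm^3$ of second-order jets at $P$, where $\gm$ is the maximal ideal of $\O_{M,P}$. Since $\bZ(M,P)$ is irreducible (Remark~\ref{brirem}) and the function $X\mapsto \dim V(-X)=\dim\bigl(V\cap H^0(\bI_X\otimes\bL)\bigr)$ is upper semicontinuous as $X$ varies in this family (the restriction maps $V\to\bL\otimes\O_X$ form a family of linear maps between spaces of fixed dimension, so the dimension of their kernels can only jump on a closed set), the general value of $\dim V(-X)$ equals the minimum over $\bZ(M,P)$. Hence it suffices to produce a single $X_0\in\bZ(M,P)$ with $\dim V(-X_0)\le\max\{0,\dim V(-2P)-1\}$. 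The opposite inequality holds for every $X\in\bZ(M,P)$: from $2P\subset X$ we get $V(-X)\subseteq V(-2P)$, and since $\ell(X)=\ell(2P)+1$ the subspace $V(-X)$ is cut out inside $V(-2P)$ by at most one linear condition, so $\dim V(-X)\ge\dim V(-2P)-1$; and trivially $\dim V(-X)\ge 0$.

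If $\dim V(-2P)=0$ there is nothing to prove, so assume $\dim V(-2P)\ge 1$. Fix local coordinates $(x_1,x_2)$ and a trivialisation of $\bL$ at $P$, and consider the leading-term map
\[
\phi\colon V(-2P)\longrightarrow \gm^2/\gm^3,\qquad s\longmapsto [\,s \bmod \gm^3\,],
\]
which sends a section vanishing to order $\ge 2$ at $P$ to its degree-two part; by definition $\ker\phi=V(-3P)$. Every $X\in\bZ(M,P)$ has local ideal $I$ with $\gm^3\subseteq I\subseteq\gm^2$, so $W:=I/\gm^3$ is a two-dimensional subspace of the three-dimensional space $\gm^2/\gm^3$; moreover $X$ is a $2$-square precisely when $W=\langle \ell_1^2,\ell_2^2\rangle$ for two independent linear forms $\ell_1,\ell_2$, i.e.\ when $W$ is a secant plane of the Veronese conic of squares in $\P(\gm^2/\gm^3)=\P^2$. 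Because $\gm^3\subseteq I$, for $s\in V(-2P)$ one has $s\in\bI_X\otimes\bL$ if and only if $\phi(s)\in W$; hence $V(-X)=\phi^{-1}(W)$ and
\[
\dim V(-X)=\dim V(-3P)+\dim\bigl(\im\phi\cap W\bigr).
\]

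The technical heart is the choice of $X_0$, i.e.\ of $W$. Assuming $\im\phi\ne 0$ (see the obstacle below), fix $s_0\in V(-2P)$ with $\phi(s_0)\ne 0$. As the squares $\{\ell^2\}$ span $\gm^2/\gm^3$ and a general secant line of the Veronese conic is a general line of $\P^2$, we may choose independent $\ell_1,\ell_2$ so that the plane $W_0=\langle\ell_1^2,\ell_2^2\rangle$ does not contain $\phi(s_0)$; let $X_0$ be the corresponding $2$-square. Then $s_0\in V(-2P)\setminus V(-X_0)$, so $\dim V(-X_0)\le\dim V(-2P)-1$, and the first paragraph finishes the proof. (Alternatively, taking $W$ a general plane one gets $\dim(\im\phi\cap W)=\max\{0,\dim\im\phi-1\}$ directly, whence $\dim V(-X)=\dim V(-3P)+\max\{0,\dim\im\phi-1\}$, which one checks equals $\max\{0,\dim V(-2P)-1\}$ as soon as $\im\phi\ne 0$.)

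The step I expect to be the real obstacle is guaranteeing $\im\phi\ne 0$, equivalently $V(-2P)\ne V(-3P)$. If instead every section of $V$ vanishing to order $2$ at $P$ vanishes to order $3$, then the reduction above gives $V(-X)=V(-3P)=V(-2P)$ for \emph{every} $X\in\bZ(M,P)$, and no $2$-square can impose the extra condition; this is the only way the formula can fail, and it must therefore be excluded. In the situations where the lemma is applied this is harmless, since $P$ is a general point of $M$ and $V$ arises from $H^0(\bL)$, so a section vanishing doubly at a general $P$ does not vanish there to order $3$.
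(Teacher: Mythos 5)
Your argument is, in substance, the paper's argument written out in local coordinates. The paper's key geometric input --- that the (scheme-theoretic) union of all 2-squares supported at $P$ equals $3P$, quoted from \cite{ccgi} --- is exactly your statement that the planes $\langle \ell_1^2,\ell_2^2\rangle$ sweep out $\gm^2/\gm^3$; both proofs then combine this with semicontinuity over the irreducible family $\bZ(M,P)$ and with the strict inclusion $V(-3P)\subsetneq V(-2P)$. Your jet-space computation is a correct, self-contained replacement for that citation, and the reduction to exhibiting a single good $X_0$ (plus the trivial lower bound $\dim V(-X)\ge \dim V(-2P)-1$) is fine.

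The step you leave open --- $\im\phi\ne 0$, i.e.\ $V(-3P)\subsetneq V(-2P)$ whenever $V(-2P)\ne 0$ --- is precisely the step the paper also does not argue directly but disposes of by citing \cite{cc1}, Proposition 2.3. So you have correctly located the crux, but your proposal does not close it, and the appeal to ``$P$ is general in the applications'' is not available inside the lemma, whose hypotheses allow an arbitrary $P\in M_{\reg}$ and an arbitrary subspace $V$. The worry is real: for special pairs, e.g.\ $V=\langle x_0^3\rangle\subset H^0(\P^2,\bO_{\P^2}(3))$ and $P=[0:0:1]$, one has $0\ne V(-2P)=V(-3P)$, every $X\in\bZ(\P^2,P)$ satisfies $V(-X)=V(-2P)$, and the asserted formula fails; so some hypothesis guaranteeing $V(-3P)\subsetneq V(-2P)$ is genuinely being consumed. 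To complete the proof as the paper does, you must invoke the cited result (with whatever hypotheses it carries) at exactly the point you flagged, rather than defer the issue to the applications.
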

	\proof\,
	If $\dim V(-2P)=0$ then the statement is trivial, so we assume $\dim V(-2P)>0$. By \cite{cc1} Proposition 2.3, we have $\dim V(-3P)<\dim V(-2P)$. Suppose by contradiction that for any 2-square $X\in\bZ(M,P)$, $\dim V(-2P)=\dim V(-X)$. By \cite{ccgi} Theorem 2.17, the union of all the 2-squares supported at $P$ is $3P$ and thus $\dim V(-2P)=\dim V(-3P)$, a contradiction. Hence, there exists a 2-square $X\in\bZ(M,P)$ such that $\dim V(-X)=\dim V(2P)-1$ and the same holds for a generic 2-square in $\bZ(M,P)$.\prfend
	
	The proof of Lemma \ref{2p3p} and Remark \ref{brirem} give the following result.
	\begin{remark}\label{addo}
		Let $X$ be a projective surface, $P\in X_{\reg}$, $\bL$ a line bundle on $X$ and $V\subseteq H^0(X,\bL)$ a linear subspace. If $V(-3P) \nsubseteq V(-2P)$,
		then $V(-Y)\subsetneq V(-2P)$ for a general 2-square $Y$ supported at $P$. Note that $\deg(3P) =6 =3+\deg(2P)$. The closure of the union of all tiles $Z$ supported at $P$
		is a closed subscheme of $3P$ with degree at least $5$. Hence, if $\dim V(-3P) \le \dim V(-2P) -2$, then $V(-Z) \subsetneq V(-2P)$ for a general tile $Z$ supported at $P$.
	\end{remark}
	Some of the results on tiles obtained in the next sections may also be proved using the second part of Remark \ref{addo}.
	As an easy consequence of Lemma \ref{2p3p}, we can prove the following theorem. Given a projective variety $M$, we denote by $\sigma_r(M)$ its $r$\textsuperscript{th} secant variety.
	\begin{thm}\label{span}
	Fix a positive integer $r$ and consider $M\subset\P^n$, an integral non-degenerate surface such that $\dim \sigma_r(M)=3r-1$. Fix an integer $s$ such that $0\leq s\leq r$ and let $X\subset M_{\reg}$ be a general union of $r-s$ double points and $s$ 2-squares. Then
	$$\dim\langle X\rangle=\min\{n,3r+s-1\}$$
	where $\langle X\rangle$ is the smallest projective subspace of $\P^n$ containing $X$.
	\end{thm}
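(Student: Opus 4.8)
The plan is to translate the statement about linear spans into one about the linear forms vanishing on $X$, and then to trade each $2$-square for a double point using Lemma~\ref{2p3p}. Set $\bL=\mathcal{O}_M(1)$ and let $V\subseteq H^0(M,\bL)$ be the image of the restriction map $H^0(\P^n,\mathcal{O}_{\P^n}(1))\to H^0(M,\bL)$; since $M$ is non-degenerate this map is injective, so $\dim V=n+1$. For a zero-dimensional $Z\subset M$, a linear form vanishes on $Z\subseteq\P^n$ exactly when its image lies in $V(-Z)$, and $\langle Z\rangle$ is the intersection of the hyperplanes these forms cut out; picking a basis of $V(-Z)$ shows that $\langle Z\rangle$ has codimension $\dim V(-Z)$, i.e.
\[
\dim\langle Z\rangle=n-\dim V(-Z).
\]
Hence it is enough to prove that $\dim V(-X)=\max\{0,\,n+1-3r-s\}$.

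First I would feed in the secant hypothesis. Let $W=2P_1\cup\cdots\cup 2P_r$ be a general union of $r$ double points of $M$, chosen so that $X$ is obtained from $W$ by upgrading $s$ of its components to general $2$-squares with the same support. Since the embedded projective tangent plane $T_{P_i}M$ equals $\langle 2P_i\rangle$, Terracini's lemma gives $\dim\sigma_r(M)=\dim\langle W\rangle$ for general $P_i$. The assumption $\dim\sigma_r(M)=3r-1$ thus forces $3r-1\le n$ and $\dim\langle W\rangle=3r-1$, that is $\dim V(-W)=n+1-3r=\max\{0,n+1-3r\}$.

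The core is an induction replacing the $s$ double points by $2$-squares one at a time. Order the supports so that $P_1,\dots,P_s$ carry the components to be upgraded, and for $0\le k\le s$ put
\[
X^{(k)}=\Big(\bigcup_{i>s}2P_i\Big)\cup X_1\cup\cdots\cup X_k\cup 2P_{k+1}\cup\cdots\cup 2P_s,
\]
with each $X_j$ a general $2$-square at $P_j$, so that $X^{(0)}=W$ and $X^{(s)}=X$. Fixing $k$, let $R_k$ denote $X^{(k-1)}$ with the component supported at $P_k$ removed, and apply Lemma~\ref{2p3p} to the subspace $V(-R_k)\subseteq H^0(M,\bL)$ and the point $P_k$. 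Because $R_k\cup 2P_k=X^{(k-1)}$ and $R_k\cup X_k=X^{(k)}$, the lemma yields
\[
\dim V(-X^{(k)})=\max\{0,\ \dim V(-X^{(k-1)})-1\}
\]
for a general $X_k$. Iterating this over $k=1,\dots,s$ collapses to $\dim V(-X)=\max\{0,\,\dim V(-W)-s\}=\max\{0,\,n+1-3r-s\}$ for the $X$ built in this way, which is a member of the irreducible family of general unions of $r-s$ double points and $s$ $2$-squares (irreducibility of the $2$-square locus $\bZ(M,P)$ coming from Remark~\ref{brirem}).

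Finally I would pass from this one good member to the general member. For every such $X$ one has $\dim V(-X)\ge\max\{0,\dim V-\ell(X)\}=\max\{0,n+1-3r-s\}$, while $Z\mapsto\dim V(-Z)$ is upper semicontinuous on the irreducible parameter space, so the general value equals the minimum, attained by the constructed $X$. Therefore $\dim V(-X)=\max\{0,n+1-3r-s\}$ generically, and
\[
\dim\langle X\rangle=n-\max\{0,n+1-3r-s\}=\min\{n,\,3r+s-1\}.
\]
The main obstacle is the bookkeeping in the inductive step: one must pick the auxiliary subspace $V(-R_k)$ so that its $2P_k$- and $X_k$-restrictions are exactly $V(-X^{(k-1)})$ and $V(-X^{(k)})$, and then ensure that the genericity demanded by Lemma~\ref{2p3p} at each stage is realised by a single general $X$ — which is what the semicontinuity argument delivers.
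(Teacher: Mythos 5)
Your proof is correct and follows essentially the same route as the paper's: establish the case of $r$ double points via Terracini's lemma, then upgrade the double points to $2$-squares one at a time by applying Lemma~\ref{2p3p} to the sections vanishing on the remaining components. The only differences are presentational — you spell out the translation $\dim\langle Z\rangle=n-\dim V(-Z)$ and the final semicontinuity step, which the paper leaves implicit.
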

	\proof\, We prove the theorem by induction on $s$. Let $Z\subset M_{\reg}$ be a general union of $r$ double points of $M$, say $Z=2P_1\cup\dots\cup2P_r$. By Terracini's Lemma, the assumption $\dim\sigma_r(M)=3r-1$ is equivalent to $\dim\langle Z\rangle=3r-1$, and this prove the base case $s=0$. Now suppose by induction that the statement is true for any $0\leq s'<s$. For any $i=1,\dots,s-1$ let $Y_i\in\bZ(M,P_i)$ be a general two squares and consider 
	$$X'=Y_1\cup\dots\cup Y_{s-1}\cup2P_s\cup\dots\cup2P_r.$$
	Hence, recalling that $\dim\langle X^\prime\rangle=n-h^0(\P^n,\bI_{X^\prime,\P^n}(1))$, the result follows by applying Lemma \ref{2p3p} to $P_s$. \prfend
	As an immediate consequence of Theorem \ref{span}, we have the following two corollaries.
	\begin{cor}\label{2squares}
	Fix a positive integer $r$ and consider $M\subset\P^n$, an integral non-degenerate surface such that $\dim \sigma_r(M)=3r-1$, and $X\subset M_{\reg}$ a general union of $s$ 2-squares with $0\leq s\leq r$. Then $\dim\langle X\rangle=\min\{n,4s-1\}$. Moreover, if $3r-1\geq n$, then $\dim\langle X\rangle=\min\{n,4s-1\}$ for any $s\in\N$.
	\end{cor}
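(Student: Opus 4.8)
The plan is to deduce the statement from Theorem \ref{span} by varying the secant index, rather than keeping $r$ fixed. The upper bound requires no work: $X$ has length $4s$ and is contained in $\P^n$, so $\dim\langle X\rangle\le\min\{n,4s-1\}$ automatically, and the whole content lies in the reverse inequality. The one genuine step, which I regard as the main point, is to observe that non-defectivity of a single secant variety forces non-defectivity of all the lower ones; everything else is a direct appeal to Theorem \ref{span}.

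First I would record that descent. By Terracini's Lemma the hypothesis $\dim\sigma_r(M)=3r-1$ means that the embedded tangent planes $T_{P_1}M,\dots,T_{P_r}M$ at $r$ general points of $M_{\reg}$ are linearly independent, i.e. their affine cones span a space of dimension $3r$. Any $s\le r$ of these cones are then independent as well, so $\dim\langle T_{P_1}M,\dots,T_{P_s}M\rangle=3s-1$; since a subset of general points is again general, a second application of Terracini gives $\dim\sigma_s(M)=3s-1$ for every $s\le r$. With this in hand the first assertion is immediate: I apply Theorem \ref{span} to $M$ with secant parameter $s$ (legitimate since $\dim\sigma_s(M)=3s-1$) and with the general union consisting of $s$ 2-squares and no double points, obtaining $\dim\langle X\rangle=\min\{n,3s+s-1\}=\min\{n,4s-1\}$.

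For the \lq\lq Moreover\rq\rq\ clause one has $3r-1\ge n$, which together with $\sigma_r(M)\subseteq\P^n$ and $\dim\sigma_r(M)=3r-1$ forces $3r-1=n$ and hence $\sigma_r(M)=\P^n$. For $s\le r$ the previous paragraph applies verbatim. For $s>r$ I would argue by containment: a general union of $s$ 2-squares contains a general union of $r$ of them, and each 2-square contains the double point supported at its point, so $X$ contains $r$ general double points whose span is $\sigma_r(M)=\P^n$ by Terracini. Therefore $\langle X\rangle=\P^n$, that is $\dim\langle X\rangle=n$, which equals $\min\{n,4s-1\}$ because $4s-1>4r-1\ge 3r-1=n$.

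The only delicate point is the descent of non-defectivity to smaller values of $s$, but as indicated this is pure linear algebra applied to the tangent spaces produced by Terracini, so no real obstacle arises and the corollary is indeed an immediate consequence of Theorem \ref{span}.
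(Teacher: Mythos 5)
Your proof is correct and follows the route the paper intends: the corollary is stated there as an immediate consequence of Theorem \ref{span}, and you supply exactly the two standard observations needed to make that deduction explicit, namely the Terracini descent $\dim\sigma_s(M)=3s-1$ for all $s\le r$ (so that Theorem \ref{span} can be applied with secant parameter $s$ and no double points) and the containment argument via the double points inside the 2-squares for the case $s>r$ when $\sigma_r(M)=\P^n$. No gaps.
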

	\begin{cor}\label{2squares2}
	Let $M\subset\P^n$ be an integral non-degenerate surface and fix $r,s\in\N$. Let $X\subset M_{\reg}$ be a general union of $r$ double points and $s$ 2-squares and $Y\subset M_{\reg}$ be a general union of $r+s$ double points. Then, $\dim\langle X\rangle=\min\{n,3r+4s-1\}$ if one of the following conditions is satisfied:
	\begin{enumerate}[leftmargin=*]
	\item $\dim\langle Y\rangle\geq n-s$;
	\item $\dim\langle Y\rangle=3(r+s)-1$.
	\end{enumerate}
	\end{cor}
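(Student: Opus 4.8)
The plan is to establish, with no extra hypothesis on $M$, the single relation
$$\dim\langle X\rangle=\min\{n,\dim\langle Y\rangle+s\},$$
and then to read off both cases (1) and (2) from it. Throughout I would work with the complete linear system $V=H^0(\P^n,\bO_{\P^n}(1))$, for which $\dim V=n+1$ and, for every zero-dimensional scheme $Z\subset\P^n$, the span $\langle Z\rangle$ is the common zero locus of $V(-Z)$; hence $\dim\langle Z\rangle=n-\dim V(-Z)$. The task thus reduces to comparing $\dim V(-X)$ with $\dim V(-Y)$.

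The core step is to upgrade double points to 2-squares one at a time. Writing $Y=2P_1\cup\cdots\cup 2P_{r+s}$ with the $P_i$ general and choosing a general 2-square $Y_i\in\bZ(M,P_i)$ at each point, I would set
$$W_j=Y_1\cup\cdots\cup Y_j\cup 2P_{j+1}\cup\cdots\cup 2P_{r+s},\qquad 0\le j\le s,$$
so that $W_0=Y$ and $W_s=X$. Passing from $W_j$ to $W_{j+1}$ only replaces $2P_{j+1}$ by $Y_{j+1}$; keeping the other components fixed and calling their union $U_j$, we have $V(-W_j)=V(-U_j)(-2P_{j+1})$ and $V(-W_{j+1})=V(-U_j)(-Y_{j+1})$. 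Applying Lemma \ref{2p3p} to the subspace $V(-U_j)$ at the point $P_{j+1}$ then gives $\dim V(-W_{j+1})=\max\{0,\dim V(-W_j)-1\}$. Iterating this recursion $s$ times collapses to $\dim V(-X)=\max\{0,\dim V(-Y)-s\}$, which is the displayed relation after passing back through $\dim\langle\cdot\rangle=n-\dim V(-\cdot)$.

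With the relation in hand both cases are immediate. Under hypothesis (2), Terracini's Lemma identifies $\dim\langle Y\rangle=3(r+s)-1$ with $\dim\sigma_{r+s}(M)=3(r+s)-1$, so $\dim\langle Y\rangle+s=3r+4s-1$ and the relation yields the claim; this case is also literally Theorem \ref{span} applied to a general union of $r+s$ double points of which $s$ have been promoted to 2-squares. Under hypothesis (1), $\dim\langle Y\rangle\ge n-s$ forces $\dim\langle Y\rangle+s\ge n$, whence $\dim\langle X\rangle=n$; and since $Y$ has length $3(r+s)$ its span satisfies $\dim\langle Y\rangle\le 3(r+s)-1$, so $n-s\le 3(r+s)-1$ gives $n\le 3r+4s-1$ and therefore $\min\{n,3r+4s-1\}=n$ as required.

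The only delicate point is the justification of the inductive step: for generic choices I must ensure that the point $P_{j+1}$ together with the general 2-square $Y_{j+1}$ sits in general position relative to the already-fixed subspace $V(-U_j)$, so that Lemma \ref{2p3p} genuinely applies at each stage. This is the same bookkeeping that underlies the proof of Theorem \ref{span}, and the $\max\{0,\cdot\}$ truncation is precisely what makes the $s$-fold recursion collapse cleanly; everything else is formal. Note that case (1) genuinely requires this direct argument rather than Theorem \ref{span}, since there $Y$ may be defective while still satisfying $\dim\langle Y\rangle\ge n-s$.
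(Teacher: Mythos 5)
Your proof is correct and follows essentially the route the paper intends: the paper derives this corollary as an immediate consequence of Theorem \ref{span}, whose proof is exactly your one-point-at-a-time iteration of Lemma \ref{2p3p}. Your observation that case (1) requires rerunning that iteration without the secant-non-defectivity hypothesis, so as to obtain the cleaner relation $\dim\langle X\rangle=\min\{n,\dim\langle Y\rangle+s\}$, is a correct and useful elaboration of what the paper leaves implicit.
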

	As an example of the use of Corollary \ref{2squares}, we describe the Hilbert function of a general union of 2-squares on any complete very ample linear system on any Hirzebruch surface $F_e$, $e\geq 0$, using the lists of their defective embeddings, see \cite{laf} Proposition 4.1 and Proposition 5.1. We use the following notation: let $F_e$, $e\ge 0$, be the Hirzebruch surface with a section $h$ of one of its ruling with self-intersection $-e$ (the ruling is unique if $e>0$). We have
	$\mathrm{Pic}(F_e)\cong \Z^2$ and we take as a basis of it $h$ and a fiber $f$ of a ruling of $F_e$ with $h\cdot f =1$. The line bundle $\bO_{F_e}(ah+bf)$ is very ample if and only if $a>0$ and $b>ae$. In the next proposition we restrict  to very ample line bundles, but for $e>0$ A. Laface in \cite{laf} also considers the complete linear systems with $b=ae$, $a>0$, which for $e\ge 2$ have as images a cone over a rational normal curve of $\P^e$, while if $e=1$ they are the linear systems $|\bI_{aP,\P^2}(a)|$ with $P\in\P^2$, and these results may be applied even to these linear systems and to non-complete linear systems using Lemma \ref{2p3p} (for instance, see Remark \ref{i5}).
	\begin{prop}\label{i4}
	Let $a,b,e\in\N$ with $a\geq 2, e>0$ and $b>ae$. Set
	$$r=ab -\frac{a(a-1)e}{2}-1$$
	and let $X\subset \P^r$ be the embedding of $F_e$ by the complete linear system $|\bO_{F_e}(ah+bf)|$. Let $W(s)\subset X$, $s>0$, be a general union of $s$ 2-squares of $X$. Then either $h^0(\bI_{W(s)}(ah+bf)) =0$ or \linebreak $h^1(\bI_{W(s)}(ah+bf)) =0$.
	\end{prop}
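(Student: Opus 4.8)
The plan is to convert the cohomological statement into a statement about the dimension of the linear span $\langle W(s)\rangle$ and then feed it into Corollary \ref{2squares}, using the defectivity lists of \cite{laf} as the single external input. First I would record that, since $X$ is embedded by the complete linear system $|\bO_{F_e}(ah+bf)|$, restriction of linear forms gives an isomorphism $H^0(\P^r,\bO(1))\cong H^0(F_e,\bO(ah+bf))$, whence $h^0(\bI_{W(s)}(ah+bf)) = r - \dim\langle W(s)\rangle$. Moreover $h^1(F_e,\bO(ah+bf)) = 0$: pushing forward along the ruling $\pi\colon F_e\to\P^1$ gives $\pi_*\bO(ah+bf)=\bigoplus_{i=0}^a\bO_{\P^1}(b-ie)$, all of whose summands have degree $\ge b-ae\ge 1$, while $R^1\pi_*=0$ because $a\ge 0$. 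Feeding $h^1(\bO(ah+bf))=0$ into the cohomology sequence of $0\to\bI_{W(s)}(ah+bf)\to\bO(ah+bf)\to\bO_{W(s)}\to 0$ yields $h^1(\bI_{W(s)}(ah+bf)) = h^0(\bI_{W(s)}(ah+bf)) + 4s - (r+1)$, since $\ell(W(s))=4s$. Combining the two relations, one checks that the desired conclusion $h^0\cdot h^1 = 0$ is \emph{equivalent} to
\[
\dim\langle W(s)\rangle = \min\{r,\,4s-1\}.
\]

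With this reformulation the statement is exactly the output of Corollary \ref{2squares}, provided $X$ is non-defective at the right secant index. So the second step is to set $\rho_0 = \lfloor (r+1)/3\rfloor$, the largest integer with $3\rho_0 - 1 \le r$, and to verify that $\dim\sigma_{\rho_0}(X) = 3\rho_0 - 1$, i.e.\ that $X$ is not $\rho_0$-defective. This is the content I would import from \cite{laf}: Propositions 4.1 and 5.1 there classify all the defective secant varieties of the embeddings of $F_e$ by $|ah+bf|$, and the point is that for $a\ge 2$, $e>0$, $b>ae$ none of the exceptional (defective) triples occur at the index $\rho_0$. Granting $\dim\sigma_{\rho_0}(X)=3\rho_0-1$, Corollary \ref{2squares} immediately gives $\dim\langle W(s)\rangle = \min\{r,4s-1\}$ for every $0\le s\le\rho_0$.

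It then remains to treat $s>\rho_0$, where $\min\{r,4s-1\}=r$, i.e.\ one must show $W(s)$ spans $\P^r$. Here I would argue by monotonicity: $W(\rho_0)\subset W(s)$, and the case $s=\rho_0$ of the previous step gives $\dim\langle W(\rho_0)\rangle = \min\{r,4\rho_0-1\}$; since $4\rho_0-1\ge r$ in the range under consideration (this holds as soon as $\rho_0\ge(r+1)/4$, which is where one uses that $r+1=h^0(\bO(ah+bf))$ is not too small), already $W(\rho_0)$ spans $\P^r$, hence so does $W(s)$. Thus $\dim\langle W(s)\rangle=r=\min\{r,4s-1\}$ for all $s>\rho_0$, which closes the equivalence of the first step. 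When $r\equiv 2\pmod 3$ one can instead invoke directly the ``moreover'' clause of Corollary \ref{2squares}, since then $3\rho_0-1=r$.

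The main obstacle is the second step: translating our normalization of $(a,b,e)$ and of the linear system $|ah+bf|$ into the conventions of \cite{laf}, and then confirming that the hypotheses $a\ge 2$, $e>0$, $b>ae$ really do avoid every defective case in his lists at the critical index $\rho_0$. Everything else is bookkeeping: the cohomological reformulation is formal once $h^1(\bO(ah+bf))=0$ is known, and the passage from the single non-defectivity statement to all values of $s$ is the span/monotonicity argument above. The only additional point demanding care is the handful of smallest linear systems, for which $4\rho_0-1<r$ might fail; these finitely many cases would be checked directly (or are excluded because $h^0(\bO(ah+bf))$ is already large enough in the stated range).
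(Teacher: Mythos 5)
Your reduction of the statement to $\dim\langle W(s)\rangle=\min\{r,4s-1\}$ and then to Corollary \ref{2squares} plus Laface's classification is exactly the paper's strategy, and the cohomological bookkeeping in your first step is correct. However, the pivotal assertion of your second step is false: it is \emph{not} true that for $a\ge 2$, $e>0$, $b>ae$ none of the defective cases of \cite{laf} occurs at the critical index. The systems $|\bO_{F_e}(2h+bf)|$ with $b$ even lie in this range and \emph{are} defective for the critical number of double points: a general $C\in|h+\tfrac{b}{2}f|$ passes through $b-e+1$ general points (since $h^0(h+\tfrac b2 f)=b-e+2$), and $2C\in|2h+bf|$ is singular there, so $h^0(\bI_{Z}(2h+bf))=1>0$ for the general union $Z$ of that many double points even though the expected value is $0$. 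For these systems $\dim\sigma_{\rho_0}(X)=3\rho_0-2<3\rho_0-1$, the hypothesis of Corollary \ref{2squares} fails at $\rho_0$, and your argument breaks down precisely where you declared the obstacle resolved. This exceptional family is the entire content of the paper's proof beyond Corollary \ref{2squares}: there one observes that $h^0(\bI_{Z}(2h+bf))=1$ and then uses Lemma \ref{2p3p} (a general 2-square imposes one more condition than the double point it contains) to conclude $h^0(\bI_{W}(2h+bf))=0$. Your proposal contains no substitute for this step.

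The gap is repairable along lines you yourself sketch, but you did not carry it out: in the defective cases one can apply Corollary \ref{2squares} at index $\rho_0-1$ (where non-defectivity does hold) and then use your span-monotonicity argument, checking that $4(\rho_0-1)-1\ge r$, which amounts to $\rho_0\ge 4$ and does hold here since $\rho_0=b-e+1\ge e+3$. Alternatively one argues as the paper does via Lemma \ref{2p3p}. As written, though, the proof rests on an incorrect reading of the defectivity lists, so it is incomplete.
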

	\proof\,Let $Z(s)\subset F_e$ be a general union of $s$ 2-points. By Corollary \ref{2squares} and \cite{laf} Proposition 4.1 and Proposition 5.1, for any $e\ge 0$ we need to check the statement for the linear systems $|\bO_{F_e}(2h+bf)|$, with $b$ even,  and $s =b+e+1$. In this case $h^0(\bI_{W(b+e+1)}(2h+bf)) =0$, because $h^0(\bI_{Z(b+e+1)}(2h+bf))=1$. \prfend
	\begin{remark}\label{au0001}
		If $e\ge 3$,  $a=1$ and $b=e$, then $\dim =|\bO_{F_e}(h+ef)| = e+1$ and the image $T\subset \P^{e+1}$ of $F_e$ by the complete linear system $|\bO_{F_e}(h+ef)|$ is a cone over a rational normal curve of $\P^{e+1}$. Hence $\dim \sigma_s(T) = \min\{e+1,2s\}$. Hence in this case even certain general unions of 2squares have not maximal rank.
	\end{remark}
	\begin{remark}\label{i5}
		Fix integers $a>m\ge 3$ and $s>0$. Fix $P\in \P^2$. Let $W(s)$ be a a general union of $s$ 2-squares. Let $u: F_1\to \P^2$ denote the blow-up of $\P^2$ at $P$. The map $u$ induces an isomorphism between the  linear system $|\bI_{mP}(a)|$ and the linear system $|(a-m)h+af|$ on $F_1$. Hence Proposition \ref{i4} gives
		that either $h^0(\bI_{mP\cup W(s)}(a)) =0$ or $h^1(\bI_{mP\cup W(s)}(a)) =0$.
		If $m>a$, then obviously $h^0(\P^2,\bI_{mP}(a)) =0$. Now assume $a=m$.
		By the list in \cite{laf}, Table 1, we need to check the case $a=m=4$ with $s=5$. In this case Lemma \ref{2p3p} gives $h^0(\bI_{4P\cup W(5)}(4)) =0$.
		In the same way we get the quadratic cone $X\subset \P^3$, image of the map $F_2\to X$. The quadratic cone is defective only with respect to $\bO_X(2)$, when consider 3 double points and its defect is $1$.
		\end{remark}
		The following example shows that, on other surfaces, not all general unions of 2-squares have good postulation. It is constructed using \cite{cat,DeP} or \cite{cc1} Example 4.3.
		\begin{exa}\label{z2}
		Fix integers $t\ge 3$, $x \ge 2t+1$ and set $n:= 1+2x$. Let $(\lambda_1,\dots ,\lambda _x)$ be the partition of $n$ with $\lambda_1=\lambda_2=3$ and $\lambda _i= 2$ if $3\le i\le x$. By \cite{cat}, Theorem 4.1, there is a rational normal scroll $X\subset \P^n$ such that $\dim \sigma_r(X) =-1+\sum _{i=1}^{r} \lambda_i$ for all $r\le x$. Since $\lambda_1=3$, the definition of $a_1(X)$ in  \cite{cat} p. 360, gives $\dim X=2$. Since $\lambda_2=3$, $X$ is not a cone. Hence $X$ is a smooth surface. 
		Since $2(x-t) \ge 2+2t$, $\sigma_t(X)$ has codimension at least $t+1$. Let $Z\subset X_{\reg}$ be a general union of $t$ double and let $W\subset X_{\reg}$ be a general union of $t$ 2-squares.
		By Terracini's Lemma, $h^0(\bI_Z(1)) \le n-t-1$ and $h^1(\bI_Z(1)) >0$. Hence, by Remark \ref{rangè}, we get $h^0(\bI_W(1)) >0$ and $h^1(\bI_W(1)) >0$.
		Taking $\lambda_{x+1} =1$ we get an example for $n$ even. 
		\end{exa}
		
	\section{The case of $\P^2$}\label{sec:p2}
	The main result of this section is Corollary \ref{p2<4}, stating that a general union of zero-dimensional schemes of length less or equal than 4 has good postulation in $\P^2$, except for the well-known cases of two and five double points. In order to get the result we show before that a general union of tiles in $\P^2$ has always good postulation and then that a general union of tiles and double points in $\P^2$ has good postulation except for the already mentioned cases. Finally we will conclude by using Remark \ref{cm} and Lemma \ref{2p3p}.  We will often invoke the following numerical lemma.
	\begin{lem}\label{numlem}
    For any integer $d\geq 4$ there exist $a\in\{0,1,2\}$ and $b\in\N$ such that $d+1=2a+3b$. Moreover, $a,b,d$ satisfies 
    $$2a+b\leq d,\quad\text{and}\quad \left\lfloor\frac{(d+2)(d+1)}{8}\right\rfloor\geq a+b.$$
	\end{lem}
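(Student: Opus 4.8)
The plan is to construct the pair $(a,b)$ explicitly by a residue argument modulo $3$ and then to verify the two inequalities by direct computation, the existence part being the only place where one has to think at all.

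First I would settle existence. As $a$ runs through $\{0,1,2\}$ the value $2a$ runs through $0,2,4$, whose residues modulo $3$ are $0,2,1$; thus $\{2a : a\in\{0,1,2\}\}$ meets every residue class mod $3$. I would therefore pick the unique $a\in\{0,1,2\}$ with $2a\equiv d+1\pmod 3$ and set $b:=(d+1-2a)/3$, which is then an integer. To check $b\in\N$ I would use $d\geq 4$ together with $a\leq 2$: since $3b=d+1-2a\geq d+1-4=d-3\geq 1$, we get $b\geq 1$, so $b$ is a positive integer regardless of whether $0\in\N$.

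For the first inequality I would substitute $2a=d+1-3b$ to obtain $2a+b=d+1-2b$; the bound $b\geq 1$ just established then gives $2a+b=d+1-2b\leq d-1\leq d$. For the second inequality the idea is that $a+b$ is linear in $d$ whereas the floor is quadratic. I would bound $a+b\leq 2+(d+1)/3$ (using $a\leq 2$ and $3b=d+1-2a\leq d+1$) and reduce the claim to the real inequality $(d+2)(d+1)/8\geq 2+(d+1)/3$: since $a+b$ is an integer not exceeding the left-hand side, it cannot exceed $\lfloor (d+2)(d+1)/8\rfloor$ either. Clearing denominators turns this into $3d^2+d-50\geq 0$, which holds at $d=4$ with value $2$ and hence for all $d\geq 4$ by monotonicity of the quadratic on that range.

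The whole argument is elementary, so there is no serious obstacle; the only step demanding a little care is the passage to the floor in the last inequality, where I would invoke the general fact that an integer bounded above by a real number $x$ is automatically bounded above by $\lfloor x\rfloor$, so that checking the clean real inequality suffices. The tightest instance is the base case $d=4$, which I would verify by hand before appealing to monotonicity.
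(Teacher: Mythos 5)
Your proof is correct and follows essentially the same elementary route as the paper: existence via the residues of $2a$ modulo $3$ (the paper just says ``coprimality of $2$ and $3$''), the first inequality reduced to $b\geq 1$, and the second reduced to a clean quadratic inequality in $d$ checked at the base case $d=4$ (the paper phrases this as a contradiction using $8a+8b=4(d+1)-4b$, but the content is the same).
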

	\proof\, The existence of $a,b$ follows by the coprimality of $2$ and $3$. Since $d+1=2a+3b$, the first inequality is equivalent to $b>0$ and this is true because $d\geq 4$ and $a\leq 2$. For the second inequality, suppose by contradiction that it is false. Then we get
	$$(d+2)(d+1)<8a+8b=4(d+1)-4b$$
	and this is a contradiction because $d\geq 4$ and $b>0$.  \prfend
	\begin{thm}\label{tiles}
	Let $d,s\in\N$ and $X\subset\P^2$ a generic union of $s$ tiles. Then $$h^0(\bI_X(d))=\max \left\{0,{d+2\choose 2}-4s\right\}.$$
	\end{thm}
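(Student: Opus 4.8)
The plan is to prove the statement by induction on $d$, using the Horace method with respect to a general line $L\subset\P^2$ and the residual exact sequence of Remark \ref{horace}. Set $n:=\binom{d+2}{2}=h^0(\bO_{\P^2}(d))$; since each tile has length $4$, it is enough, for each fixed $d$, to settle the two ``critical'' values of $s$: for $s_1=\lceil n/4\rceil$ I would show $h^0(\bI_X(d))=0$, and for $s_0=\lfloor n/4\rfloor$ I would show $h^1(\bI_X(d))=0$, i.e. $h^0(\bI_X(d))=n-4s_0$. The remaining values of $s$ then follow by monotonicity: adding a tile to a scheme with $h^0(\bI_X(d))=0$ keeps $h^0=0$, which covers the range $s\ge s_1$, while a sub-collection of $4s_0$ independent conditions is again independent, which covers the range $s\le s_0$. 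The base of the induction is the range $d\le 3$, where every element of $\bZ(M,P)$ contains the double point $2P$ and the few possibilities are checked directly; the inductive step uses Lemma \ref{numlem}, which requires $d\ge 4$ and reduces degree $d$ to degree $d-1$.

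For the inductive step fix a general line $L$ and let $a\in\{0,1,2\}$, $b\in\N$ be given by Lemma \ref{numlem}, so that $2a+3b=d+1$ and, in the critical range, $a+b\le\lfloor n/4\rfloor\le s$ guarantees that there are enough tiles to specialise. I would place $a+b$ of the $s$ tiles at general points of $L$: $b$ of them with long side equal to $L$, and $a$ of them with a general long side transverse to $L$. Using the local model $(x_2^2,x_1x_2,x_1^3)$ of Definition \ref{hilb4} and the intersection characterisation of Remark \ref{brirem}, one computes that a tile with long side $L$ meets $L$ in a length-$3$ scheme and has $\Res_L$ equal to the simple point at its support, whereas a tile whose long side is transverse to $L$ meets $L$ in a length-$2$ scheme and has $\Res_L$ a $2$-jet (supported on $L$, pointing along the long side). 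Hence the trace $X\cap L$ has total length $3b+2a=d+1=h^0(\bO_L(d))$, and, being a general length-$(d+1)$ scheme on $L\cong\P^1$, it imposes independent conditions, so that $h^0(\bI_{X\cap L,L}(d))=0$. The residual is
\[
\Res_L(X)=T\cup C,\qquad \ell(C)=2a+b,
\]
where $T$ is a general union of $s-a-b$ tiles and $C$ is a curvilinear scheme (the $b$ simple points and the $a$ $2$-jets just described).

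By the residual exact sequence, the completeness of the trace gives $h^0(\bI_X(d))\le h^0(\bI_{\Res_L(X)}(d-1))$, while the reverse inequality $h^0(\bI_X(d))\ge\max\{0,n-4s\}$ is automatic; so it suffices to evaluate $h^0(\bI_{\Res_L(X)}(d-1))$. I would apply the inductive hypothesis to $T$, obtaining $h^0(\bI_T(d-1))=\max\{0,\binom{d+1}{2}-4(s-a-b)\}$, and then add the curvilinear scheme $C$ using Remark \ref{cm} (the points on $L$ may alternatively be absorbed through Lemma \ref{comlem} and Remark \ref{comlem0}). This gives $h^0(\bI_{\Res_L(X)}(d-1))=\max\{0,h^0(\bI_T(d-1))-(2a+b)\}$, and a direct computation using $2a+3b=d+1$ and $\binom{d+1}{2}+(d+1)=\binom{d+2}{2}$ shows that this equals $\max\{0,n-4s\}$ in both critical cases (the inequality $2a+b\le d$ of Lemma \ref{numlem} keeps the curvilinear correction in range). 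Upper semicontinuity then transfers the bound from the specialised configuration to a general union of $s$ tiles, completing the step.

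The main obstacle is this last reduction: the curvilinear part $C$ is \emph{not} a fully general union of curvilinear schemes of $\P^2$, since its support is forced onto $L$ and the $2$-jets are transverse to $L$, so Remark \ref{cm} does not apply verbatim. One must check that such a $C$ still imposes the maximal number of conditions on $V=H^0(\bI_T(d-1))$. The key point is that $L$ is chosen general and independent of $T$, so the subfamily of curvilinear schemes supported on $L$ is not contained in the proper closed locus where conditions degenerate; a general member of this subfamily therefore realises the expected drop. Making this ``a general line is general enough'' argument precise, together with the direct verification of the base cases $d\le 3$, is where the real care is needed; the rest is the arithmetic bookkeeping already packaged into Lemma \ref{numlem}.
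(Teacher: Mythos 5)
Your overall strategy coincides with the paper's: the same reduction to the two critical values of $s$, the same specialisation of $a+b$ tiles onto a line $L$ ($a$ with transverse long side, $b$ with long side $L$), the same use of Lemma \ref{numlem}, and the same Horace reduction to the residual $T\cup C$ in degree $d-1$. The gap is exactly at the point you flag as the main obstacle, and your proposed fix does not close it. The assertion that, because $L$ is general, the subfamily of curvilinear configurations supported on $L$ "is not contained in the proper closed locus where conditions degenerate" is not a valid deduction: that subfamily has positive codimension in the parameter space of Remark \ref{cm}, and a general member of a positive-codimensional subfamily can perfectly well sit inside the bad locus. The constraint is real: the conditions that a scheme supported on $L$ imposes on $V=H^0(\bI_T(d-1))$ are governed by the restriction map $\rho:H^0(\bI_T(d-1))\to H^0(L,\bO_L(d-1))$, whose kernel is $H^0(\bI_T(d-2))$, and nothing in your argument controls this kernel. (For instance, $d+1$ general points on a general line never impose independent conditions on $H^0(\bO_{\P^2}(d-1))$, however general the line is; only the bound $2a+b\le d$ plus an analysis of $\rho$ saves the day.)

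The paper closes the gap with two moves you would need to add. First, it specialises the $a$ transverse $2$-jets $J_i$ to $2$-jets $\tilde J_i$ contained in $L$ (a flat limit, legitimate by semicontinuity), so that the whole correction $\tilde J\cup S'$ lies on $L$ with length $2a+b\le d$; note that without this step neither Remark \ref{cm} on the curve $L$ nor Lemma \ref{comlem}/Remark \ref{comlem0} (which require the schemes to lie on the divisor) is applicable. Second, it invokes the inductive hypothesis for $W=T$ in degree $d-2$ as well as $d-1$ and splits into two subcases: if $h^0(\bI_W(d-2))=0$ then $\rho$ is injective and Remark \ref{cm} applied to $\im(\rho)\subseteq H^0(L,\bO_L(d-1))$ gives the expected drop of $2a+b$; if instead $h^1(\bI_W(d-2))=0$ then a second application of the residual exact sequence with respect to $L$, using $\ell((\tilde J\cup S')\cap L)=2a+b\le d$, gives $h^1=0$ directly. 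You never use the degree-$(d-2)$ behaviour of $T$, and that is precisely the missing ingredient. Finally, the base case $d=3$ is not routine and needs an explicit configuration (the paper uses two tiles with long sides on two distinct lines and the residual with respect to the conic $L\cup R$, then adds a third tile through a length-$2$ curvilinear scheme).
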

	\proof\, For any $d\in\N$ we set
	$$s_*(d):=\left\lfloor\frac{{d+2\choose 2}}{4}\right\rfloor=\left\lfloor\frac{(d+2)(d+1)}{8}\right\rfloor, \quad s^*(d):=\left\lceil\frac{{d+2\choose 2}}{4}\right\rceil=\left\lceil\frac{(d+2)(d+1)}{8}\right\rceil.$$
	By Remark \ref{rangè} to prove the statement for the integer $d$ and all the integers $s$ it suffices to prove it for $s=s_*(d)$ and for $s=s^*(d)$. Moreover, by the semicontinuity theorem for cohomology it is enough to prove the statement for a specialisation of $X$. The cases $d=1$ and $d=2$ are trivial. For the rest of the proof we fix a line $L$.
	\begin{itemize}[leftmargin=*]
	\item The case $d=3$.\\
	We have $s_*(3)=2$ and $s^*(3)=3$ and we start by proving the statement for $s=2$. Let $R\neq L$ be a line of $\P^2$, $P_1\in L\setminus(L\cap R)$, $P_2\in R\setminus(L\cap R)$ and $Y_1$ and $Y_2$ two tiles such that $Y_1$ is supported at $P_1$, $Y_2$ is supported at $P_2$, $L$ is the long side of $Y_1$ and $R$ is the long side of $Y_2$. Note that 
	$$h^0(\bI_{(Y_1\cup Y_2)\cap (L\cup R),L\cup R}(3))=1$$
	and, since $\Res_{L\cup R}(Y_1\cup Y_2)=\{P_1,P_2\}$ we get
	$$h^0(\bI_{\Res_{L\cup R}(Y_1\cup Y_2)}(1))=1,\quad h^1(\bI_{\Res_{L\cup R}(Y_1\cup Y_2)}(1))=0. $$ Hence, the residual exact sequence of $Y_1\cup Y_2$ with respect to $L\cup R$ gives $h^0(\bI_{Y_1\cup Y_2}(3))=2$ and this prove the statement for $d=3$ and $s=2$. To prove the statement for $d=3$ and $s=3$ it suffices to note that, by Remark \ref{cm}, for a general curvilinear scheme $J$ with $\ell(J)=2$ we have $h^0(\bI_{Y_1\cup Y_2\cup J}(3))=0$ and there exist a tile $Y_3$ containing $J$. Hence $h^0(\bI_{Y_1\cup Y_2\cup Y_3}(3))=0$.
	\item The case $d\geq 4$.\\
	Suppose by induction that the theorem is true for any $d'<d$. By Lemma \ref{numlem} there exist $a\in\{0,1,2\}$ and $b\in\N$ such that $d+1=2a+3b$. Fix $a+b$ distinct points on $L$, say $P_1,\dots,P_a$ and $Q_1,\dots,Q_b$ and set
	$$S=\{P_1,\dots,P_a\},\quad S'=\{Q_1,\dots,Q_b\}.$$
	Since we can suppose $s\in\{s_*(d),s^*(d)\}$, by Lemma \ref{numlem} we have $s\geq a+b$. Now we consider the schemes
	$$X:=Y_1\cup\dots\cup Y_a\cup Z_1\cup\dots\cup Z_b\cup W$$
	$$Y:=Y_1\cup\dots\cup Y_a,\quad Z:=Z_1\cup\dots\cup Z_b,$$
	where:
	\begin{itemize}[leftmargin=*]
	\item $Y_i$ is a tile supported at $P_i$ having  a line $R_i\neq L$ as its long side, for $i=1,\dots,a$. Thus, $\ell(Y_i\cap R_i)=3$, $ \ell(Y_i\cap L)=2$ and  $\Res_LY_i=J_i$, where $J_i$ is a 2-jet supported at $P_i$ and lying on $R_i$;
	\item $Z_i$ is a tile supported at $Q_i$ having $L$ as long side, for $i=1,\dots,b$. Thus, $\ell(Z_i\cap L)=3$ and $\Res_L Z_i=Q_i$;
	\item $W$ is a general union of $s-a-b$ tiles away from $L$.
	\end{itemize}
	To prove the theorem it is enough to prove it for this special choice of $X$. Since $\ell(X\cap L)=d+1$, by the residual exact sequence of $X$ with respect to $L$ it is enough to prove that 
	$$h^0(\bI_{\Res_LX}(d-1))=\max\left\{0,{d+1\choose 2}-\ell(\Res_LX)\right\}.$$
	Note that 
	$$\Res_LX=J_1\cup\dots\cup J_a\cup Q_1\cup\dots\cup Q_b\cup W.$$
	 Now set $\tilde{J}:=\tilde{J_1}\cup\dots\cup\tilde{J_a}$, where $\tilde{J_i}$ is the 2-jet supported at $P_i$ and contained in $L$. Since $J_i$ is a 2-jet contained in $R_i$ and, for every $i=1,\dots,a$, $L$ is a flat limit of a line containing $P_i$, we have that $\tilde{J}$ is a flat limit of a family containing $J$. Hence, if we set $X^\prime=\tilde{J}\cup S'\cup W$, it is enough to prove that
	$$h^0(\bI_{X^\prime}(d-1))=\max\left\{0,{d+1\choose 2}-\ell(X^\prime)\right\}$$
	which is in turn equivalent to show that
	$h^0(\bI_{X^\prime}(d-1))=0$ or $h^1(\bI_{X^\prime}(d-1))=0$. Since $W$ is a general union of tiles, the inductive assumption gives that either $h^0(\bI_W(d-1))=0$ or $h^1(\bI_W(d-1))=0$ and either $h^0(\bI_W(d-2))=0$ or $h^1(\bI_W(d-2))=0$. If $h^0(\bI_W(d-1))=0$ then $h^0(\bI_{X^\prime}(d-1))=0$ and we are done, thus, from now on we suppose $h^1(\bI_W(d-1))=0$ and we distinguish two subcases:
	\begin{itemize}[leftmargin=*]
	\item Subcase $h^0(\bI_W(d-2))=0$.\\
	This assumption implies that the restriction map
	$$\rho:H^0(\bI_W(d-1))\to H^0(L,\bO_L(d-1))$$
	is injective. Since $h^1(\bI_W(d-1))=0$, to conclude this case it is enough to prove that
	$$h^0(\bI_{X^\prime}(d-1))=\max\{0,h^0(\bI_W(d-1))-\ell(\tilde{J}\cup S')\}=\max\{0,h^0(\bI_W(d-1))-2a-b\}.$$
	Consider the vector space $\im(\rho)\subseteq H^0(L,\bO_L(d-1))$  and note it depends only on $W$ and not on $X^\prime\setminus W$, so that we may take $S$ and $S^\prime$ generic after fixing $W$. Remark \ref{cm} gives
	$$\dim\im(\rho)(-\tilde{J}-S')=\max\{0,\dim\im(\rho)-2a-b\}.$$
	Hence, $h^0(\bI_{X^\prime}(d-1))=\max\{0,h^0(\bI_W(d-1))-2a-b\}$.
	\item Subcase $h^1(\bI_W(d-2))=0$.\\
	Since $L\cap W=\emptyset$, we have $\ell(X^\prime\cap L)=2a+b$ and Lemma \ref{numlem} gives $h^1(\bI_{X^\prime\cap L,L}(d-1))=0$.  Hence, the residual exact sequence of $X^\prime$ with respect to $L$ gives $h^1(\bI_{X^\prime}(d-1))=0$.
	\end{itemize}
	\end{itemize}
	The theorem is now proved. \prfend
	\begin{lem}\label{d<6}
		Let $(d,r,s)\in\N^3$ with $d\leq 6$ and let $X\subset\P^2$ be a general union of $r$ double points and $s$ tiles. Then
		$$h^0(\bI_X(d))=\max\left\{0,{d+2\choose 2}-3r-4s\right\}$$
		except for $(d,r,s)=(2,2,0),(4,5,0)$. 
	\end{lem}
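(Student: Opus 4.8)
The plan is to argue by induction on $d$, treating $d=0$ (only constants) as trivial and, for each fixed $d\in\{1,\dots,6\}$, reducing to a bounded list of \emph{critical} pairs $(r,s)$. Applying Remark \ref{rangè} with $V=H^0(\P^2,\bO_{\P^2}(d))$, it suffices to prove two things: that $X$ imposes independent conditions (equivalently $h^1(\bI_X(d))=0$) whenever $3r+4s\le {d+2\choose 2}$, and that $h^0(\bI_X(d))=0$ whenever $3r+4s\ge {d+2\choose 2}$. Indeed, the first property for a given $(r,s)$ follows from its validity on any larger configuration containing $X$, and the second follows from its validity on any smaller configuration contained in $X$, so only the pairs with $3r+4s$ adjacent to ${d+2\choose 2}$ need to be handled directly. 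Within this window the pure-tile cases ($r=0$) are already furnished by Theorem \ref{tiles}, and the pure-double-point cases ($s=0$) by the classical interpolation theorem for double points of $\P^2$, whose only defective instances with $d\le 6$ are exactly $(d,r)=(2,2)$ and $(4,5)$; these account for the two stated exceptions.

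For a critical pair with $s\ge 1$ the plan is to run the Horace method of Theorem \ref{tiles} along a fixed line $L$, now distributing tiles \emph{and} double points against $L$. Each tile is specialised either so that its long side is $L$, contributing length $3$ to the trace $X\cap L$ and a simple point to $\Res_L(X)$, or so that its long side is some line $R\ne L$, contributing length $2$ to the trace and a transverse $2$-jet to the residual; as in Theorem \ref{tiles}, such a transverse jet is a flat limit of $2$-jets lying inside $L$, so after specialisation all residual jets may be assumed to sit on $L$. A double point centred on $L$ contributes length $2$ to the trace and a simple point to the residual, while a double point off $L$ contributes nothing to the trace and its full self to the residual. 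Using Lemma \ref{numlem} to split $d+1=2a+3b$, one chooses the distribution so that $\ell(X\cap L)\le d+1$, which forces $h^1(\bI_{X\cap L,L}(d))=0$ since $L\cong\P^1$, and so that the residual degree is balanced against ${d+1\choose 2}$.

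The residual exact sequence of Remark \ref{horace} with respect to $L$ then passes from degree $d$ to $d-1$. Here $\Res_L(X)$ is a general union of the off-line double points and tiles, together with the simple points and $2$-jets supported on $L$ produced above: the former is governed by the inductive hypothesis (Lemma \ref{d<6} in degree $d-1$, or Theorem \ref{tiles} when no double point survives), and the on-line curvilinear part is absorbed exactly as in Theorem \ref{tiles}, by passing to the image of the restriction map to $H^0(L,\bO_L(d-1))$ and invoking Remark \ref{cm} together with Lemma \ref{comlem} (Remark \ref{comlem0}). The main obstacle is the bookkeeping: one must check that for every critical pair with $d\le 6$ and $s\ge 1$ a balanced split exists and that the residual never lands on a lower defective pair, so that the induction closes without defect, and that the only pairs admitting no such split are $(2,2,0)$ and $(4,5,0)$, where the double line through two points, respectively the double conic through five general points, forces $h^0\ge 1$. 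Since $d$ is bounded by $6$ this is a finite verification; the two delicate points are the flat-limit step moving the transverse $2$-jets onto $L$, and confirming that the presence of even a single tile always breaks the defect otherwise created at $(2,2,0)$ and $(4,5,0)$, so that these remain the sole exceptions.
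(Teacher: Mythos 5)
Your overall strategy is the same as the paper's: reduce via Remark \ref{rangè} to the critical pairs $(r,s)$ with $3r+4s$ adjacent to $\binom{d+2}{2}$, dispose of $r=0$ by Theorem \ref{tiles} and of $s=0$ by Alexander--Hirschowitz, and handle the remaining pairs by a Horace step along a line $L$, specialising each tile so that $L$ either is or is not one of its long sides and moving the resulting transverse $2$-jets into $L$ by a flat limit. The problem is that everything after the set-up is deferred: you end with ``since $d$ is bounded by $6$ this is a finite verification'', but that finite verification \emph{is} the lemma --- the reduction to critical pairs and the Horace template are the routine part, and the paper's proof consists precisely of writing down, for each $d\in\{2,\dots,6\}$, an explicit specialised configuration and computing its trace and residual in one step (for $d=3$ the pair $(2,1)$ with both double points on $L$ and the tile off $L$; for $d=4$ the pair $(1,3)$ with one tile having $L$ as long side; etc.). As written, your text establishes nothing beyond what Theorem \ref{tiles} and the classical double-point theorem already give.

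Moreover, at least one of the deferred checks is genuinely non-routine, and it is exactly the one you flag without resolving (``confirming that the presence of even a single tile always breaks the defect''). Take $d=4$ and the critical pair $(r,s)=(4,1)$, for which one must show $h^0(\bI_X(4))=0$ with $\ell(X)=16$. Monotonicity gives nothing here: the only sub-configuration consisting of double points and of length at least $15$ is the defective union of five double points, and $X$ does not contain a general union of one double point and three tiles; nor can one place $a+b=2$ tiles on $L$ when $s=1$, so your generic $2a+3b$ split does not apply mechanically. One has to argue directly, for instance that $H^0(\bI_{2P_1\cup\cdots\cup 2P_4}(4))$ is exactly the space spanned by products $CC'$ of members of the pencil of conics through $P_1,\dots,P_4$, and that no such product contains a general tile supported at a general point $Q$ (the quartic would have to be the square of the unique pencil member through $Q$, whose tangent at $Q$ is not the long side of a general tile). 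Some argument of this kind, or a carefully chosen specialisation, is needed for each pair adjacent to the defective double-point configurations $(2,2,0)$ and $(4,5,0)$; your proposal identifies the danger but supplies no proof for it.
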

	\proof\, If $s=0$ the theorem is a classical case of the Alexander-Hirschowitz theorem and if $r=0$ the theorem is Theorem \ref{tiles} so, if needed, we can assume $r\geq1$ and $s\geq 1$. For the rest of the proof we fix a line $L$. The case $d=1$ is trivial, so we distinguish the following 5 cases:
	\begin{itemize}[leftmargin=*]
		\item Case $d=2$.\\
		Since we can suppose $r\geq1$ and $s\geq 1$, it is enough to prove the statement for $(r,s)=(1,1)$. Let $Y$ be a tile supported at $P\in L$ having $L$ as its long side and $Q\in\P^2\setminus L$. The residual exact sequence of $Y\cup2Q$ with respect to $L$ gives $h^0(\bI_{Y\cup2P}(2))=h^0(\bI_{P\cup2Q}(1))=0$.
		\item Case $d=3$.\\
		Since we can suppose $r\geq 1$ and $s\geq 1$ and any tile contains a double point, by Remark \ref{rangè} it is enough to prove the statement for $(r,s)=(2,1)$. Let $Y$ be a general tile away from $L$ and $Q_1,Q_2\in L$. The residual exact sequence of $Y\cup2Q_1\cup2Q_2$ with respect to $L$ gives
		$$h^0(\bI_{Y\cup2Q_1\cup2Q_2}(3))=h^0(\bI_{Y\cup Q_1\cup Q_2}(2))=2-1-1=0$$
		where the last equality follows by $h^0(\bI_Y(2))=2$ and the fact that $Q_1$ and $Q_2$ are general.
		\item Case $d=4$.\\
		By the same argument used in the previous cases, it suffices to prove the statement for \linebreak $(r,s)=(1,3)$. Let $Y_1$ be a tile supported at $P\in L$ having $L$ as its long side, $Y_2,Y_3$ two tiles away from $L$ and $Q\in L\setminus P$. The residual exact sequence of $Y_1\cup Y_2\cup Y_3\cup2Q$ with respect to $L$ gives
		$$h^0(\bI_{Y_1\cup Y_2\cup Y_3\cup2Q}(4))=h^0(\bI_{P\cup Y_2\cup Y_3\cup Q}(3))=2-2=0$$
		where the last equality follows by Theorem \ref{tiles} and the fact that $P$ and $Q$ are general.
		\item Case $d=5$.\\
		Note that if $X$ is a general union of 5 tiles, then Theorem \ref{tiles} gives $h^0(\bI_X(5))=1$. Using the same argument of the previous cases and noting that any general tile and any general double point impose at least one additional condition, we conclude this case.
		\item Case $d=6$.\\
		By Remark \ref{rangè} it suffices to prove the statemen for a general union of 7 tiles, but this true by Theorem \ref{tiles}. \prfend
	\end{itemize}
	\begin{thm}\label{fattiles}
	Fix $(d,r,s)\in\N^3$ with and let $X\subset\P^2$ be a general union of $r$ double points and $s$ tiles. Then
	$$h^0(\bI_X(d))=\max\left\{0,{d+2\choose 2}-3r-4s\right\}$$
	except for $(d,r,s)=(2,2,0),(4,5,0)$.
	\end{thm}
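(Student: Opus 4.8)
The plan is to prove the statement by induction on $d$, reducing each case to the previous one via the Horace method (the residual exact sequence of Remark \ref{horace}) applied to a line $L$. The base cases $d\le 6$ are already settled by Lemma \ref{d<6}, which also contains the two genuine exceptions $(2,2,0)$ and $(4,5,0)$; these exceptions do not propagate because they occur only in low degree and involve $s=0$, so for $d\ge 7$ the formula will hold without exception. By Remark \ref{rangè}, for each fixed $d$ it suffices to verify the two extremal cases where $3r+4s$ is as close as possible to $\binom{d+2}{2}$ from below and from above, i.e. the cases that decide whether $h^0$ or $h^1$ vanishes; and by semicontinuity it is enough to exhibit a single favourable specialisation of $X$.

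\textbf{Main induction.} Assume $d\ge 7$ and the statement for all smaller degrees. I would distribute some of the double points and tiles onto the fixed line $L$ so that $\ell(X\cap L)=d+1$, exactly as in the proof of Theorem \ref{tiles}. Concretely, I would place $a$ tiles with long side \emph{not} equal to $L$ (so each contributes a length-$2$ jet to $L$ and a length-$2$ jet to the residual, after the flat limit $\tilde J_i$ trick of Theorem \ref{tiles}), $b$ tiles with long side $L$ (contributing length $3$ to $L$ and a single point to the residual), and some double points with support on $L$ (each contributing length $2$ to $L$ and a simple point to the residual). Choosing the nonnegative integers counting these so that the total length on $L$ is exactly $d+1$ is a numerical bookkeeping step, analogous to Lemma \ref{numlem} but now with a mixture of weights $2$ and $3$ from tiles together with weight $2$ from double points; the freedom of having both schemes available makes this \emph{more} flexible than the pure-tile case, so such a distribution will exist. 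On $L$ one then has $h^1(\bI_{X\cap L,L}(d))=0$ (the degree-$(d+1)$ scheme on $L$ fills $H^0(\O_L(d))$), and the residual scheme $\Res_L X$ is a general union of $r'$ double points, $s'$ tiles, and a controlled number of $2$-jets and simple points, of the correct total length, to which the inductive hypothesis for degree $d-1$ applies.

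\textbf{Executing the residual step.} The residual $\Res_L X$ is not purely a union of double points and tiles: it also carries the jets $\tilde J_i$ and the simple points coming from the schemes supported on $L$. To close the induction I would handle these auxiliary schemes by the same device used in Theorem \ref{tiles} and in Lemma \ref{comlem}: the jets and simple points on $L$ can be absorbed, after taking $S,S'$ generic relative to the tiles-and-double-points part $W$, by applying Remark \ref{cm} (curvilinear schemes impose independent conditions on the restricted linear system $\operatorname{im}(\rho)\subseteq H^0(L,\O_L(d-1))$) in the subcase $h^0(\bI_W(d-2))=0$, and by the residual sequence together with $h^1$-vanishing on $L$ in the subcase $h^1(\bI_W(d-2))=0$. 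Thus the two-subcase split of Theorem \ref{tiles} carries over verbatim once $W$ is taken to be the off-$L$ general union of the remaining double points and tiles, for which the inductive hypothesis in degrees $d-1$ and $d-2$ is available.

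\textbf{The main obstacle} is the bookkeeping that guarantees a valid specialisation exists for \emph{both} extremal values of $s$ (equivalently both $s_*(d)$ and $s^*(d)$ in the mixed count) while keeping $\ell(X\cap L)=d+1$ and keeping enough off-$L$ schemes for $W$ to satisfy the inductive hypotheses; one must verify the analogue of the inequality $s\ge a+b$ from Lemma \ref{numlem}, now accounting for double points as well, so that the schemes one wishes to place on $L$ are actually available. A secondary delicate point is checking that the residual count is \emph{exactly} $\binom{d+1}{2}$-critical, so that the inductive hypothesis in degree $d-1$ returns the sharp $h^0$ or $h^1$ vanishing rather than a strict inequality; this is where one uses that $X$ sits at one of the two extremal values of $3r+4s$. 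Once these numerical checks are in place, the cohomological step is routine and mirrors Theorem \ref{tiles} exactly.
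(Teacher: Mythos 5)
Your overall skeleton is the paper's: induction on $d$ with base cases from Lemma \ref{d<6}, reduction to extremal values of the parameters via Remark \ref{rangè}, and a specialisation onto a line $L$ with trace of length $d+1$ followed by the same two-subcase split ($h^0(\bI_W(d-2))=0$ versus $h^1(\bI_W(d-2))=0$) as in Theorem \ref{tiles}. However, the step you defer as ``numerical bookkeeping'' --- the existence of a valid distribution of schemes on $L$ for both extremal values of the parameters --- is precisely where the content of the paper's proof lies, and your assertion that ``the freedom of having both schemes available makes this more flexible than the pure-tile case, so such a distribution will exist'' is not a proof. The paper splits into the case $s\ge a+b$ (fill $L$ with $a+b$ tiles exactly as in Theorem \ref{tiles}, no double points on $L$ needed) and the case $s<a+b$, and in the latter it must further distinguish: ($d$ odd, $2r_*(d,s)\ge d+1$) where $L$ is filled with $(d+1)/2$ double points alone; ($d$ even, $2r_*(d,s)\ge d$) where $L$ is filled with exactly one tile having $L$ as long side plus $d/2-1$ double points (the single tile is needed to fix the parity, since double points contribute even trace); and the residual range $r_*(d,s)\le \frac{d}{2}-\frac{1}{2}$, which is shown to be \emph{vacuous} by the explicit chain of inequalities combining $3r+4s\ge\binom{d+2}{2}-2$, $s<a+b$, $2a+b\le d$ and $3b\ge d+1$ into a contradiction for $d\ge 7$. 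Without this last verification your argument has a genuine hole: when both $r$ and $s$ are small relative to $d$ there is no way to reach trace length $d+1$ on $L$, and one must prove such $(r,s)$ simply cannot be extremal.

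A secondary imprecision: Remark \ref{rangè} does not reduce the whole theorem to ``two extremal cases'' of $3r+4s$; the paper fixes $s$ and checks the two extremal values $r_*(d,s)$ and $r^*(d,s)$ for \emph{each} $s$, since monotonicity only lets you add or delete schemes, not trade tiles for double points. This matters because the case analysis above genuinely depends on the size of $s$ relative to $a+b$, not only on the total length of $X$.
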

	\proof\, By the same argument of Lemma \ref{d<6} we can assume $r\geq 1$ and $s\geq 1$. For any $d,s\in\N$ set
	$$r_*(d,s):=\left\lfloor\frac{{d+2\choose 2}-4s}{3}\right\rfloor,\quad r^*(d,s):=\left\lceil\frac{{d+2\choose 2}-4s}{3}\right\rceil.$$
	By Remark \ref{rangè}, given a triple $(d,r,s)$ we can suppose $r\in\{r_*(d,s),r^*(d,s)\}$ and, by Lemma \ref{d<6} we can suppose $d\geq 7$. Moreover, by the semicontinuity theorem for cohomology it is enough to prove the statement for a specialisation of $X$. Now fix $d$ and suppose by induction that the statement is true for any $d'<d$. Let $a,b\in\N$ as in Lemma \ref{numlem} such that $d+1=2a+3b$ and fix a line $L\subset\P^2$. We destinguish now 2 cases:
	\begin{itemize}[leftmargin=*]
	\item Case $s\geq a+b$.\\
	In this case it is enough to prove the statement for $X=X_1\cup X_2$ where $X_1$ is a set of $a+b$ tiles specialised on $L$ as in the proof of Theorem \ref{tiles} and $X_2$ is a general union of $r$ double points and $s-a-b$ tiles. The inductive assumption gives that $h^0(\bI_{X_2}(d-1))=0$ or $h^1(\bI_{X_2}(d-1))=0$ and $h^0(\bI_{X_2}(d-2))=0$ or $h^1(\bI_{X_2}(d-2))=0$: indeed, note that the only exceptional cases appear when $d=2$ or $d=4$, but we are assuming $d\geq7$. Hence, we can proceed as in the proof of Theorem \ref{tiles} and this concludes this case.
	\item Case $s<a+b$.\\
	Here we distinguish three subcases:
	\begin{itemize}[leftmargin=*]
		\item Subcase $d$ odd and $2r_*(d,s)\geq d+1$.\\
		Let $Q_1,\dots,Q_{(d+1)/2}$ be distinct points on $L$ and take 
		$$X=2Q_1\cup\dots\cup2Q_{(d+1)/2}\cup W.$$
		where $W$ is a general union of $r-\frac{d+1}{2}$ double points and $s$ tiles. The residual exact sequence of $X$ with respect to $L$ gives
		$$h^0(\bI_X(d))=h^0(\bI_{\Res_LX}(d-1))$$
		hence, to prove this case it suffices to prove that $h^0(\bI_{\Res_LX}(d-1))=0$ or $h^1(\bI_{\Res_LX}(d-1))=0$. We have 
		$$\Res_L X=Q_1\cup\dots\cup Q_{(d+1)/2}\cup W$$
		and by inductive assumption $h^0(\bI_{W}(d-1))=0$ or $h^1(\bI_{W}(d-1))=0$ and $h^0(\bI_{W}(d-2))=0$ or
		$h^1(\bI_{W}(d-2))=0$. Hence, since $W\cap L=\emptyset$, we can proceed as in the proof of Theorem \ref{tiles} and this concludes this case.
		\item Subcase $d$ even and $2r_*(d,s)\geq d$.\\
		Note that for the argument in this subscase it would suffices $2r_*(d,s)\geq d-2$, but we take $2r_*(d,s)\geq d$ in order to get  three mutually excluding subcases. Let $P,Q_1,\dots,Q_{d/2+1}$ be distinct points on $L$ and take
		$$X=Y\cup2Q_1\cup\dots\cup2Q_{d/2-1}\cup W$$
		where $Y$ is a tile supported at $P$ and having $L$ as its long side and $W$ is a general union of $s-\frac{d}{2}+1$ double points and $s-1$ tiles (recall that we are assuming $s\geq 1$). At this point the proof is analogous to the previous case.
		\item Subcase $r_*(d,s)\leq \frac{d}{2}-\frac{1}{2}$.\\
		Note that this is the only remaining case. Recall that, by Lemma \ref{numlem}, $2a+b\leq d$ and, since $2a+3b=d+1$ and $a\in\{0,1,2\}$, we have $3b\geq d+1$. Moreover, since we are assuming $r\in\{r_*(d,s),r^*(d,s)\}$, we also have $3r+4s\geq{d+2\choose 2}-2$. Putting all these inequalities together, and recalling that we are in the case $s<a+b$, we get:
		$${d+2\choose 2}-2\leq 3r+4s\leq \frac{3}{2}d-\frac{11}{2}+4a+4b\leq \frac{7}{2}d-\frac{11}{2}+2b\leq\frac{25}{6}d-\frac{29}{6}$$
		and this a contradiction. 	\prfend
	\end{itemize}
	\end{itemize}
	We can now prove the following corollary.
	\begin{cor}\label{p2<4}
	Let $d\in\N$ and $X\subset\P^2$ a general union of 2-squares, double points, tiles and curvilinear schemes of length less or equal than 4. Then
	$$h^0(\bI_X(d))=\max\left\{0,{d+2\choose 2}-\ell(X)\right\}$$
	except for the case where $X$ the union of two double points and $d=2$ or $X$ is the union of five double points and $d=4$.
    \end{cor}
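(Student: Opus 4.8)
The plan is to separate $X$ into its four kinds of connected components and peel them off in the order dictated by the available tools, reducing everything to the already-settled case of double points and tiles. Write $X$ as a general union of $a$ 2-squares, $r$ double points, $s$ tiles and finitely many curvilinear schemes of lengths $t_1,\dots,t_k\le 4$; set $T=t_1+\cdots+t_k$ and $V=H^0(\P^2,\bO(d))$, so that $h^0(\bI_X(d))=\dim V(-X)$ and $\dim V=\binom{d+2}{2}$. I introduce two auxiliary schemes: $X''$, the general union of the $a+r$ double points (the $a$ supports of the 2-squares together with the $r$ genuine ones) and the $s$ tiles; and $X'=X''\cup(\text{curvilinear part})$, obtained from $X$ by replacing each 2-square by a double point at its support while retaining the curvilinear schemes. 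Thus $\ell(X)=\ell(X'')+a+T$.

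First I would strip off the curvilinear schemes. Applying Remark \ref{cm} with the linear subspace $V(-X'')$ gives $\dim V(-X')=\max\{0,\dim V(-X'')-T\}$. Next I would upgrade the double points $2P_1,\dots,2P_a$ to 2-squares one at a time: applying Lemma \ref{2p3p} at $P_i$ to the subspace of $V(-X')$ vanishing on all other components shows that each upgrade costs exactly one extra condition, and an immediate induction (using $\max\{0,\max\{0,m\}-1\}=\max\{0,m-1\}$) yields $\dim V(-X)=\max\{0,\dim V(-X')-a\}$. Composing the two reductions produces the master formula
$$\dim V(-X)=\max\{0,\dim V(-X'')-T-a\}.$$

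It then remains to feed in Theorem \ref{fattiles} for $X''$ and to bookkeep lengths. Whenever $X''$ has good postulation, $\dim V(-X'')=\max\{0,\dim V-\ell(X'')\}$ and the master formula collapses to $\dim V(-X)=\max\{0,\dim V-\ell(X)\}$, as wanted. The only remaining work, and the place where the exceptions live, is the two exceptional triples $(d,a+r,s)=(2,2,0)$ and $(4,5,0)$ of Theorem \ref{fattiles}, for which $\dim V(-X'')=1$ while $\dim V-\ell(X'')=0$. There the master formula gives $\dim V(-X)=\max\{0,1-T-a\}$, so $X$ retains good postulation as soon as $a\ge 1$ or $T\ge 1$: a direct length count (e.g. $\dim V-\ell(X)=-(a+T)$ in both cases) shows the target value is then $0$, exactly what the formula returns. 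The genuine defect $\dim V(-X)=1$ survives only when $a=T=0$, i.e. when $X=X''$ is a pure union of double points, which recovers precisely the excluded configurations of two double points with $d=2$ and five double points with $d=4$. The main obstacle is thus not the inductive machinery but this final bookkeeping: verifying that the unit defect of $X''$ in the two Alexander-Hirschowitz cases is always absorbed by the single extra condition imposed by any 2-square or any curvilinear component, and fails to be absorbed only in the two listed pure-double-point cases.
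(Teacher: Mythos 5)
Your proof is correct and follows exactly the route the paper takes: its proof of Corollary \ref{p2<4} is the one-line ``It follows immediately by Remark \ref{cm}, Lemma \ref{2p3p} and Theorem \ref{fattiles}'', and your argument is precisely the expansion of that line, with the master formula $\dim V(-X)=\max\{0,\dim V(-X'')-T-a\}$ making the intended reduction explicit. Your final bookkeeping of the two Alexander--Hirschowitz exceptions (the unit defect being absorbed by any 2-square or curvilinear component) is also correct and is the only part the paper leaves entirely to the reader.
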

	\proof\, It follows immediately by Remark \ref{cm}, Lemma \ref{2p3p} and Theorem \ref{fattiles}. \prfend
	 We want know to construct some examples of unions of tiles and of unions of 2-squares in $\P^2$ which have bad postulation. In particular, we construct some examples with a prescribed index of regularity and, in the case of tiles, we show that they are the only possible ones. For any positive integer $t\geq 1$ we denote by $\bA(t)$ the set of all disjoint unions of $t$ 2-squares in $\P^2$ and by $\bB(t)$ the set of all disjoint unions of $t$ tiles in $\P^2$. We recall here the following well-know fact.
	\begin{remark}\label{easy}
		Let $X\subset \P^2$ be a union of $t\geq 2$ distinct double points. Since any conic containing a 2-point $2P$ is singular at $P$ and $t\ge 2$, then $H^0(\bI_X(2)) \ne 0$ if and only if there is a line $L\subset \P^2$ such that $X\subset 2L$.
	\end{remark}
	\begin{exa}\label{new1}
		Fix a line $L\subset\P^2$ and a set $S\subset L$ such that $S$ is a set of $t\geq2$ distinct points. For each $p\in S$ fix a line $R_p$ such that $p\in R_p$ and $R_p\ne L$. Set $A_p:= 2L\cap 2R_p$ and 
		$$A:= \bigcup_{p\in S} A_p.$$
		Remark \ref{easy} gives that any union $U$ of $t$ 2-squares such that $h^0(\bI_U(2))\ne 0$ is constructed as $A$ and, obviously, $h^0(\bI_A(2)) =1$. The residual exact sequence of $A$ with respect to $2L$ gives $h^0(\bI_A(d)) \ge \binom{d}{2}$ for all $d>2$ and $h^0(\bI_A(d)) =\binom{d}{2}$ for all $t\ge (d+1)/2$.
	\end{exa}
	\begin{exa}\label{new11}
	Fix a line $L$ and a set $S\subset L$ such that $S$ is a set of $t\geq2$ distinct points. Let $B\subset \P^2$ be any union of $t$ tiles supported at $S$ and such that $L$ is a long side for each of them. Remark \ref{easy} gives that any union $U$ of $t$ tiles such that $h^0(\bI_U(2))\ne 0$ is constructed as $B$ for some $L$ and $S$ and that $h^0(\bI_B(2)) =1$. Since $\ell(L\cap B)=3t$, \eatit{we get $h^1(\bI_{B\cap L}(3t-2))>0$ and thus $h^1(\bI_B(3t-2)) >0$.} the residual exact sequence of $B$ with respect to $L$ gives $h^1(\bI_B(3t-2)) =1$ and $h^1(\bI_B(d)) =0$ for all $d\ge 3t-1$.
	\end{exa}
	Before proving the next proposition, we recall the following well-know remark.
	\begin{remark}\label{zerozero}
	Let $Z\subset \P^2$ be a zero-dimensional scheme. Then, $h^1(\bI_Z(\deg(Z) -2))\ne 0$ if and only if $Z$ is contained in a line.
	\end{remark}
	\begin{prop}\label{new2}
	Let $B\subset \P^2$ be a union of $t\ge 2$ tiles. We have $h^1(\bI_B(3t-1)) =0$ and $h^1(\bI_B(3t-2)) \ne 0$ if and only if $B$ is as in Example \ref{new11}.
	\end{prop}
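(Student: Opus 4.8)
The plan is to reduce the whole statement to a single vanishing lemma by an elementary length count of the line sections of $B$, and then to prove that lemma by a Horace induction based on the residual sequence of Remark \ref{horace}.

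First I would record how a tile meets a line. If $Y$ is a tile supported at $P$ with long side $R$, then by Definition \ref{hilb4} a line $L$ satisfies $\ell(Y\cap L)=3$ when $L=R$, $\ell(Y\cap L)=2$ when $P\in L\neq R$, and $\ell(Y\cap L)=0$ when $P\notin L$. Since $B=Y_1\cup\cdots\cup Y_t$ is a disjoint union, for every line $L$ we get $\ell(B\cap L)=3k+2m$, where $k$ counts the tiles having $L$ as long side and $m$ the tiles supported on $L$ with a different long side, so that $k+m\le t$. Writing $3k+2m=3(k+m)-m$ gives $\ell(B\cap L)\le 3t$, with equality forcing $m=0$ and $k=t$. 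Hence $\ell(B\cap L)=3t$ for some line $L$ if and only if every tile is supported on $L$ and has $L$ as its long side, that is, if and only if $B$ is as in Example \ref{new11}; in every other case $\ell(B\cap L)\le 3t-1$ for all $L$.

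The heart of the matter is the following vanishing statement, which I would isolate: if $B$ is a disjoint union of $t$ tiles with $\ell(B\cap L)\le d+1$ for every line $L$ and $4t=\deg B\le 2d+1$, then $h^1(\bI_B(d))=0$. Granting it, the proposition is immediate. Taking $d=3t-1$, the count above shows $\ell(B\cap L)\le 3t=d+1$ for every $B$ and $4t\le 2d+1=6t-1$, so $h^1(\bI_B(3t-1))=0$ unconditionally. Taking $d=3t-2$ with $t\ge 2$, we have $4t\le 6t-3=2d+1$; if $B$ is not as in Example \ref{new11} then $\ell(B\cap L)\le 3t-1=d+1$ for all $L$, whence $h^1(\bI_B(3t-2))=0$. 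This is the contrapositive of the \emph{only if} direction, while the \emph{if} direction is precisely the computation already performed in Example \ref{new11}.

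To prove the lemma I would induct, applying the residual sequence of Remark \ref{horace} along a line $L_0$ realising the maximum of $\ell(B\cap L)$. As $\ell(B\cap L_0)\le d+1=h^0(L_0,\bO_{L_0}(d))$, the restriction term $h^1(\bI_{B\cap L_0,L_0}(d))$ vanishes, so it suffices to prove $h^1(\bI_{\Res_{L_0}B}(d-1))=0$. Now $\Res_{L_0}B$ is a union of tiles (those off $L_0$), of $2$-jets (tiles on $L_0$ with long side $\neq L_0$) and of simple points (tiles with long side $L_0$), of strictly smaller degree; as this class of schemes is stable under residuation along lines and the degree bound descends to level $d-1$, I would prove the more general statement for unions of tiles, $2$-jets and points under the same two hypotheses, the few-component cases being settled directly as in Theorem \ref{tiles} and Lemma \ref{d<6}. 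The hard part will be the propagation of the per-line bound, i.e. checking that $\ell(\Res_{L_0}B\cap L')\le d$ for every line $L'$. This is genuinely delicate, because residuation along $L_0$ need not by itself lower the section along a given $L'$ (a $2$-jet transverse to $L_0$ residuates to a point still lying on $L_0$, so $\ell(\,\cdot\cap L')$ can be unchanged); the control comes precisely from combining the maximality of $L_0$ with the degree bound $\deg\le 2d+1$, which forbids the accumulation of too many transverse pieces on a single line. Managing this interplay — the correct choice and order of the lines along which one peels, so that both numerical hypotheses survive each step down to the empty scheme — is the technical core of the argument, everything else being the formal Horace induction together with the length count above.
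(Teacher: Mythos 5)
Your reduction is clean and correct: the count $\ell(B\cap L)=3k+2m\le 3t$, with equality exactly when every tile is supported on $L$ and has $L$ as long side, is right, and granting your vanishing lemma the proposition (indeed a slightly stronger statement, since $h^1(\bI_B(3t-1))=0$ would then hold unconditionally) follows at once. This is also a genuinely different route from the paper, which argues directly by induction on $t$: it takes a line $L$ maximizing $z=\ell(L\cap B)$, assumes $z<3t$, and derives a contradiction by residuating first along $L$ and then along $2R$ and $R$ for a long side $R$ of one of the tiles, using Remark \ref{zerozero} together with the inequality $\ell(2R\cap B)\le 2\ell(R\cap B)$ to force $\ell(B)\ge 6t-4$.

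The problem is that your key lemma is not proved, and the step you yourself flag as ``the technical core'' is a genuine gap rather than a routine verification. Concretely: suppose $L'$ is a line with $\ell(B\cap L')=d+1$ and no component of $B$ is supported at $L_0\cap L'$. Then $\Res_{L_0}B\cap L'=B\cap L'$ still has length $d+1>d$, so the per-line hypothesis simply does not descend to level $d-1$; no local bookkeeping fixes this, and one must separately show that under the bound $\deg B\le 2d+1$ at most one such ``full'' line can survive and that it can always be peeled first --- a combinatorial argument you have not supplied, and which is essentially where all the difficulty of the proposition lives. The lemma itself is true: for any zero-dimensional scheme $Z\subset\P^2$ with $\deg Z\le 2d+1$ and $h^1(\bI_Z(d))\ne 0$ there is a line $L$ with $\ell(Z\cap L)\ge d+2$ (a classical Castelnuovo/Ellia--Peskine type statement, not proved or cited anywhere in this paper). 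Invoking that result would close the gap and make your argument shorter than the paper's; as written, however, the proof is incomplete precisely at its load-bearing step.
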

	\proof\,By Example \ref{new11} it is sufficient to prove that if $h^1(\bI_B(3t-2)) \ne 0$, then $B$ is as described in Example \ref{new11}. Assume $h^1(\bI_B(3t-2)) \ne 0$.
	Take a line $L\subset \P^2$ such that $z:= \ell(L\cap B)$ is maximal. The definition of tile gives $z\ge 3$ and if $z=3$ we can also assume that $L$ is a long side of one of the tiles of $B$. We have $\ell(\Res_L(B)) =4t-z$. Note that to conclude it is sufficient to prove that $z=3t$. We assume by contradiction that $z<3t$. Since $z<3t$, then $h^1(L,\bI_{B\cap L,L}(3t-2)) =0$ and thus the residual exact sequence of $B$ with respect to $L$ gives $$h^1(\bI_{\Res_L(B)}(3t-3)) >0.$$
	First assume $t=2$. Since $h^1(\bI_{\Res_L(B)}(3)) >0$ and $\ell(\Res_L(B))=8-z\le 5$, by Remark \ref{zerozero} and by the assumption $h^1(\bI_B(3t-1))=0$, we get that $z=3$ and $\Res_L(Z)$ is contained in a line $R$. Since $z=3$, $R$ contains one of the tiles of $B$ but, since no tile is contained in a line, this is a contradiction.
	Now assume $t>2$ and that the statement is true for the union of at most $t-1$ tiles. Since $z<3t$, there is at least one line $R\ne L$ that is a long side for one of the tiles of $B$. Take one $R\ne L$ wich is a long side of one of the tiles $B$ and set $w:= \ell(R\cap B)$. First assume $$h^1(2R,\bI_{2R\cap B,2R}(3t-2)) =0.$$
	The residual exact sequence of $2R$ gives $h^1(\bI_{\Res_{2R}(B)}(3t-4)) >0$. Since $\Res_{2R}(B)$ is contained in the union of at most $t-1$ disjoint tiles
	and $3t-4=3(t-1)-1$, the inductive assumption gives a contradiction, thus 
	$$h^1(2R,\bI_{2R\cap B,2R}(3t-2))>0.$$
	Since $w\le z<3t$ we have 
	$$h^1(R,\bI_{R\cap B,R}(3t-2))=0.$$
	Hence, the residual exact sequence of $2R\cap B$ with respect to $R$ gives 
	$$h^1(\bI_{\Res_R(2R\cap B)}(3t-3)) >0.$$
	Remark \ref{zerozero} gives
	$$\ell(\Res_R(2R\cap B)) \ge 3t-2.$$
	Note that $\Res_R(2R\cap B) \subset R$ and that $$\ell(\Res_R(2R\cap B)) =\ell(2R\cap B)-\ell(R\cap B).$$
	For any zero-dimensional scheme $W$ we have $\ell(2R\cap W)\le 2\ell(R\cap W)$, thus $w\ge 3t-2$. Hence, $\ell(B)\ge \ell(2R\cap B)\ge 6t-4$, a contradiction. \prfend
	\begin{prop}\label{new2.0}
		Fix integers $t\ge 3$ and $e$ such that 
		$$\frac{1+\sqrt{288t-369}}{6} \le d\le 3t-2.$$ Then there is a union $B\subset \P^2$ of $t$ distinct tiles such that $h^1(\bI_B(d)) \ne 0$ and $h^1(\bI_B(d+1)) =0$.
	\end{prop}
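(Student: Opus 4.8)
The plan is to exhibit, for each admissible $d$, an explicit union $B$ of $t$ distinct tiles with $h^1(\bI_B(d))\ne 0$ and $h^1(\bI_B(d+1))=0$, by specialising a controlled number of the tiles onto a fixed line $L\subset\P^2$ and keeping the rest general. The top value $d=3t-2$ is already covered by Proposition \ref{new2} (all $t$ tiles having $L$ as long side), so I focus on the range strictly below it and produce a single flexible configuration.

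First I force the intersection with $L$ to have length exactly $d+2$. Choose integers $a,b\ge 0$ with $3a+2b=d+2$ and $a+b\le t$, taking $a+b$ as large as these two constraints allow; this is possible because $\gcd(2,3)=1$ and $d+2\le 3t$, and passing one long‑side tile to two transverse ones (or the reverse) lets me realise every residue of $d$ modulo $3$. Let $a$ tiles have $L$ as a long side (meeting $L$ in length $3$), let $b$ tiles be supported on $L$ with a transverse long side (meeting $L$ in length $2$), and let the remaining $t-a-b$ tiles be general and disjoint from $L$. Then $\ell(B\cap L)=3a+2b=d+2>d+1$, so $h^1(L,\bI_{B\cap L,L}(d))=1$; feeding this into the residual sequence of Remark \ref{horace} for $L$ and using $H^2=0$ gives a surjection $H^1(\bI_B(d))\twoheadrightarrow H^1(L,\bI_{B\cap L,L}(d))$, whence $h^1(\bI_B(d))\ne 0$.

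The substance is the vanishing $h^1(\bI_B(d+1))=0$, and here the difficulty is that $\Res_L(B)$ is \emph{not} a general union: it consists of $a$ reduced points lying on $L$, $b$ length‑$2$ jets issuing transversally from $L$, and $t-a-b$ general tiles, so Corollary \ref{p2<4} cannot be applied to it directly. The device is to peel off $L$ a \emph{second} time. The residual sequence at degree $d+1$ reduces the claim to $h^1(L,\bI_{B\cap L,L}(d+1))=0$ (which holds since $\ell(B\cap L)=d+2=(d+1)+1$) together with $h^1(\bI_{\Res_L(B)}(d))=0$; applying Remark \ref{horace} once more to $\Res_L(B)$ reduces the latter to the trivial line estimate $a+b\le d+1$ and to $h^1\big(\bI_{\Res_L(\Res_L(B))}(d-1)\big)=0$. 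Now $\Res_L(\Res_L(B))$ \emph{is} a general union of $b$ simple points and $t-a-b$ tiles, of length $4t-(d+2)-(a+b)$, to which Corollary \ref{p2<4} does apply (its exceptional cases are pure double‑point configurations and do not occur here). It therefore yields the vanishing as soon as $\binom{d+1}{2}\ge 4t-(d+2)-(a+b)$, and choosing $a+b$ maximal a short computation shows this inequality holds throughout the interval $\tfrac{1+\sqrt{288t-369}}{6}\le d\le 3t-2$; this is exactly the role of the stated lower bound.

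The main obstacle is precisely the non‑genericity introduced in the first step: the $a+b$ points forced onto $L$ destroy the hypotheses of Corollary \ref{p2<4}, and what rescues the argument is the iterated restriction to $L$, which strips these collinear points away and leaves a genuinely general residual governed by the good‑postulation results of Section \ref{sec:p2}. Two routine verifications remain, namely that the splitting $d+2=3a+2b$ with $a+b\le t$ can be made uniformly across all residues of $d$ modulo $3$, and that the jet directions, the jet‑tips and the general tiles can be placed off $L$ so that no exceptional configuration of Corollary \ref{p2<4} arises at degree $d-1$. No semicontinuity or flat‑limit argument is required, since $B$ is produced outright as an honest union of $t$ distinct tiles.
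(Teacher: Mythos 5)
Your strategy is the same as the paper's: specialise enough tiles onto a fixed line $L$ so that $\ell(B\cap L)=d+2$ (which forces $h^1(\bI_B(d))\ne 0$ through the trace on $L$ and the vanishing of $H^2$ of the residual), keep the remaining tiles general, and prove $h^1(\bI_B(d+1))=0$ by repeatedly applying the residual exact sequence of Remark \ref{horace} with respect to $L$. The nonvanishing part and the first two reductions are correct. The gap is the assertion that $\Res_L(\Res_L(B))$ \emph{is} a general union of $b$ simple points and $t-a-b$ tiles. It is not: each of the $b$ tiles supported on $L$ with a transverse long side has second residual equal to its support point, and these $b$ points all lie on $L$. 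A collinear set of points is not a general union, so Corollary \ref{p2<4} does not apply to this scheme, and the inequality $\binom{d+1}{2}\ge 4t-(d+2)-(a+b)$ that you verify is not the hypothesis of any result you can quote. This is not a cosmetic slip: managing the non-generality produced by residuation is the whole difficulty of a Horace argument, and you correctly flag it after the first peel but lose track of it after the second.

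Two repairs are available. (i) Peel off $L$ a third time: the trace of $\{b \text{ points}\}\cup W$ on $L$ has length $b\le d$, and the residual is the genuinely general $W$ in degree $d-2$, so the condition becomes $4(t-a-b)\le\binom{d}{2}$ (and $\binom{d-1}{2}$ in the residue classes where a transverse jet survives one more step). This is what the paper does: it chooses the specialised tiles so that $B_1$ is contained in $2L$ or $3L$ and hence $\Res_{2L}(B)$ or $\Res_{3L}(B)$ is exactly the general part $B_2$; it is this stronger numerical condition, not yours, that produces the stated lower bound $\frac{1+\sqrt{288t-369}}{6}$ on $d$. (ii) Keep two peels but replace Corollary \ref{p2<4} by Lemma \ref{comlem} or Remark \ref{comlem0}, since the $b$ points are general \emph{on the divisor} $L$; this requires checking the hypothesis $h^0(\bI_W(d-2))\le h^0(\bI_W(d-1))-b$, which you do not do. As written, neither step is present, so the vanishing $h^1(\bI_B(d+1))=0$ is not established.
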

	
	\begin{proof}
	Fix a line $L\subset\P^2$. We distinguish 3 cases according to class of $d\pmod{3}$.
	\begin{itemize}[leftmargin=*]
	\item Case $d\equiv 1\pmod{3}$.\\
	Take $S\subset L$ a set of $(d+2)/3$ distinct points on $L$ and let $B_1$ be a union of $(d+2)/3$ distinct tiles supported at $S$ and having $L$ as one of their long side. Set 
	$$B:= B_1\cup B_2$$ with $B_2$ general in $\bB(t-(d+2)/3)$. Since $B_2$ is general, then $B_2\cap L=\emptyset$. Now, note that:
	$$\Res_L(B)=S\cup B_2,\quad \Res_{2L}(B)=B_2$$
	and, by numerical assumption,
	$$4\left(t-\frac{d+2}{3}\right)\le \binom{d}{2}.$$
	Using twice the residual exact sequence with respect to $L$ together with these data and Theorem \ref{tiles}, it is easy to see that $h^1(\bI_B(d))=1$ and $h^1(\bI_B(d+1))=0$.
	\item Case $d\equiv2\pmod{3}$.\\
	Take $S\subset L$ a set of $(d-2)/3$ distinct points and $S'\subset L$ a set of two distinct points with $S\cap S'=\emptyset$. Let $B_1$ be a the set of $(d-2)/3+2$ distinct tiles such that $(d-2)/3$ of them are supported on the points of $S$ and have $L$ as one of their long sides and 2 of them are supported on the points of $S'$ and do not have $L$ as one of their long sides. Let $B_2\in\bB(t-(d+4)/3)$ general and take
	$$B:=B_1\cup B_2.$$
	Since $B_1\subset 3L$ to prove that $h^1(\bI_{B_1\cup B_2}(d)) =1$ and $h^1(\bI_{B_1\cup B_2}(d+1)) =0$ it is sufficient to proceed analogously to the previous case by noting that $\ell(B_2)\leq {d-1\choose 2}$ by our numerical assumption.
	\item Case $d\equiv0\pmod{3}$.\\
	Assume $d\equiv 0\pmod{3}$. In this case take $B_1$ made of $d/3$ distinct tiles supported on distinct points of $L$ and having $L$ as one of their long side, and one tile supported on another point of $L$ but not having $L$ as one their long side. Then, set
	$$B=B_1\cup B_2$$
	with $B_2\in\bB(t-d/3-1)$ general and proceed as in the previous cases. \prfend
	\end{itemize}
	\end{proof}	
	\section{The case of $\P^1\times\P^1$}\label{sec:p1p1}
	The main result of this section is Corollary \ref{p1p1<4}, stating that a general union of zero-dimensional schemes of length less or equal than 4 has good postulation in $\P^1\times\P^1$, except for the well-known cases (see \cite{laf,lp}).  In order to get the result we proceed as in the previous section: we show before that a general union of tiles in $\P^1\times\P^1$ has always good postulation and then that a general union of tiles and double points in $\P^1\times\P^1$ has good postulation except for the already mentioned cases. Finally we will conclude by using Remark \ref{cm} and Lemma \ref{2p3p}. 
	\begin{remark}\label{gtilep1p1}
	Note that if $Z\subset \P^1\times\P^1$ is a general tile supported at $P$ and $L$ is the only element of $|\bO_Y(1,0)|$ passing through $P$, then $L$ is not a long side of $P$. Thus, $\ell(Z\cap L)=2$ and $\Res_L Z=J$ is a curvilinear scheme of length 2 supported at $P$. Morever, we have $\ell(J\cap L)=1$ and $(\Res_LJ)=P$.
	\end{remark}
	\begin{lem}\label{tilesprodd<3}
	Let $Y=\P^1\times\P^1$ and $X\subset Y$ a general union of $s$ tiles and $d\in\{1,2\}$. Then, for any $e\in\N$
	$$h^0(\bI_X(d,e))=\max\{0,(d+1)(e+1)-4s\}.$$
	\end{lem}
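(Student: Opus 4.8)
The plan is to fix $d\in\{1,2\}$ and argue by induction on $e$, in the spirit of the proof of Theorem~\ref{tiles}. Since good postulation is equivalent to $h^0(\bI_X(d,e))\cdot h^1(\bI_X(d,e))=0$, Remark~\ref{rangè} reduces the statement, for fixed $(d,e)$, to the two critical values $s_*=\lfloor (d+1)(e+1)/4\rfloor$ and $s^*=\lceil (d+1)(e+1)/4\rceil$, and by the semicontinuity theorem it suffices to prove it for any convenient specialisation of $X$. The finitely many base cases $e\le 1$ are checked directly (for instance by residuating along a ruling). For the inductive step take $e\ge 2$, assume the statement for $e-1$ and $e-2$ and all $s$, and assume $s\ge 1$ (the case $s=0$ being trivial). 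Fix a ruling $M\in|\bO(0,1)|$; then $\bO(d,e)(-M)=\bO(d,e-1)$ and $\bO(d,e)_{|M}=\bO_M(d)$, so residuation along $M$ decreases $e$ by one. I will specialise exactly one tile of $X$ so that $\ell(X\cap M)=d+1$; since a length-$(d+1)$ subscheme of $M\cong\P^1$ imposes independent conditions on $\bO_M(d)$, this forces $h^0(M,\bI_{X\cap M,M}(d))=0$, and the residual exact sequence of Remark~\ref{horace} then gives $h^0(\bI_X(d,e))=h^0(\bI_{\Res_M X}(d,e-1))$. Everything is thus reduced to the postulation of $\Res_M X$ with respect to $\bO(d,e-1)$.

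For $d=2$ I take one tile $Z_1$ supported at a point $P_1\in M$ and having $M$ as its long side, so that $\ell(Z_1\cap M)=3=d+1$ and $\Res_M Z_1=P_1$ is a reduced point of $M$; the remaining general union $W$ of $s-1$ tiles is placed off $M$, so that $\Res_M X=P_1\cup W$ and $\Res_M W=W$. I then add the point $P_1\in M$ to $W$ using Lemma~\ref{comlem} with $D=M$. The inductive hypothesis evaluates $h^0(\bI_W(2,e-1))$ and $h^0(\bI_{\Res_M W}(2,e-2))=h^0(\bI_W(2,e-2))$; if the latter vanishes I invoke the second bullet of Lemma~\ref{comlem}, and otherwise $h^1(\bI_W(2,e-2))=0$ and the first bullet applies, since the two inductive values differ by $3$. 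In either case one obtains $h^0(\bI_{P_1\cup W}(2,e-1))=\max\{0,3(e+1)-4s\}$, as wanted.

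For $d=1$ I take one \emph{general} tile $Z_1$ supported at $P_1\in M$; by the analogue of Remark~\ref{gtilep1p1} for the ruling $M$ (obtained by exchanging the two factors) one has $\ell(Z_1\cap M)=2=d+1$ and $J_1:=\Res_M Z_1$ is a length-$2$ curvilinear scheme supported at $P_1$ and transverse to $M$, the remaining $s-1$ tiles $W$ being off $M$. Since $J_1\not\subset M$, I first degenerate the tangent direction of $J_1$ to that of $M$, obtaining a $2$-jet $\tilde J_1\subset M$; this is a flat family with $W$ fixed, so upper semicontinuity gives $h^0(\bI_{J_1\cup W}(1,e-1))\le h^0(\bI_{\tilde J_1\cup W}(1,e-1))$, while $\ell(J_1\cup W)=4s-2$ forces the reverse inequality. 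Now $\tilde J_1$ is a curvilinear scheme of length $2$ lying on $M$, so Lemma~\ref{comlem} combined with Remark~\ref{comlem0} applies with $D=M$; the same two subcases as before (the relevant difference of inductive values now being $2$) give $h^0(\bI_{\tilde J_1\cup W}(1,e-1))=\max\{0,2(e+1)-4s\}$, hence the same value for $J_1\cup W$, for $\Res_M X$, and for $X$.

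The point requiring care, and the main obstacle, is twofold. Numerically, one must verify for the two critical values $s\in\{s_*,s^*\}$ that the quantity $h^0(\bI_W(d,e-2))$ lies in the regime making exactly one of the two bullets of Lemma~\ref{comlem} available, and that the boundary cases in which $h^0(\bI_W(d,e-1))<d+1$ still return $\max\{0,(d+1)(e+1)-4s\}$; this is the analogue of the subcase bookkeeping in the proof of Theorem~\ref{tiles}. Geometrically, for $d=1$ the residual of a tile is curvilinear but a priori not supported on the divisor $M$ used for the Horace reduction, so the passage from $J_1$ to $\tilde J_1$ must be set up as a genuine flat degeneration in order to invoke both semicontinuity and Remark~\ref{comlem0}. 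This is exactly the step corresponding to the replacement of $J$ by $\tilde J$ in the proof of Theorem~\ref{tiles}, and it is where the argument is most delicate.
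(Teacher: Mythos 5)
Your proof is correct and follows essentially the same route as the paper's: induction on $e$ with a Horace step along a fibre of the ruling $|\bO_Y(0,1)|$, specialising one tile so that it meets the fibre in length $d+1$ (the fibre a long side for $d=2$, a general tile for $d=1$) and then handling the residual scheme on the fibre. The only cosmetic difference is that for $d=1$ the paper keeps the length-$2$ residual curvilinear scheme transverse to the fibre and invokes Remark~\ref{cm} directly, whereas you degenerate it into the fibre and apply Lemma~\ref{comlem} together with Remark~\ref{comlem0}; both variants are valid.
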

	\proof\, Fix a point $P\in Y$ and let $L$ the unique element of $|\bO_Y(1,0)|$ passing through $P$ and $R$ the unique element of $|\bO_Y(0,1)|$ passing through $P$.
	\begin{itemize}[leftmargin=*]
		\item Case $d=1$.\\
		First assume $e=1$. By Remark \ref{rangè} it is enough to prove that if $Z$ a tile, then $h^0(\bI_Z(1,1))=0$, and we can suppose $Z$ to be supported at $P$. This is immediate by using the exact sequence of $Z$ with respect to $L$ and by Remark \ref{gtilep1p1} and Remark \ref{cm}. Now let $e>1$ and suppose by induction that the statement is true for any $(d,e')$ with $1\leq e'<e$. Consider
		$$X=Z_1\cup\dots\cup Z_s$$
		a general union of $s$ tiles, with $Z_1$ supported at $P$. By general assumption, we have that $R$ is not a long side of $Z_1$. Hence, the residual exact sequence of $X$ with respect to $R$ gives
		$$h^0(\bI_X(1,e))=h^0(\bI_{\Res_R(X)}(1,e-1)).$$ 
		Since $\Res_R(X)=J\cup Z_2\cup\dots\cup Z_s$, with $J$ a curvilinear scheme supported at $P$ with $\ell(J)=2$, the statement follows by inductive assumption and Remark \ref{cm}.
		\item Case $d=2$.\\
		The case $e=1$ is analogous to the previous case, so suppose $e>1$ and by induction that the statement is true for any $(d,e')$ with $1\leq e'<e$. Recall that, by semicontinuity of cohomology, it is enough to prove the statement for a special union $X$. Let $X=Z_1\cup\dots\cup Z_s$ with $Z_2,\dots,Z_s$ general tiles and $Z_1$ a tile supported at $P$ and such that $R$ is a long side of $Z_1$. The residual exact sequence of $X$ with respect to $R$ gives
		$$h^0(\bI_X(2,e))=h^0(\bI_{\Res_R(X)}(2,e-1)).$$ 
		Since $\Res_R(X)=P\cup Z_2\cup\dots\cup Z_s$, the statement follows by the inductive assumption.\prfend
	\end{itemize}
	\begin{thm}\label{tilesprod}
	Let $Y=\P^1\times\P^1$ and $X\subset Y$ a general union of $s$ tiles. Then, for any $(d,e)\in\N^2$
	$$h^0(\bI_X(d,e))=\max\{0,(d+1)(e+1)-4s\}.$$
	\end{thm}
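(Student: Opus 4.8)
The plan is to imitate the proof of Theorem~\ref{tiles}, replacing the line $L\subset\P^2$ by a ruling of $Y=\P^1\times\P^1$ and carrying out a Horace induction that peels off one ruling at a time. Since swapping the two factors is an automorphism carrying $\bO_Y(d,e)$ to $\bO_Y(e,d)$ and a general tile to a general tile, and since Lemma~\ref{tilesprodd<3} (together with its mirror image) settles every pair with $\min\{d,e\}\le 2$, I would assume $d,e\ge 3$ and argue by induction on $d+e$. By Remark~\ref{rangè} it is enough to treat $s=\lfloor(d+1)(e+1)/4\rfloor$ and $s=\lceil(d+1)(e+1)/4\rceil$, and by the semicontinuity theorem it suffices to exhibit, for each such $s$, one specialisation of the $s$ tiles with the asserted value of $h^0$.

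For the inductive step I would reduce whichever coordinate leaves the other one $\ge 4$: if $d\ge 4$ I fix $R\in|\bO_Y(0,1)|$ and write $d+1=2a+3b$ with $a\in\{0,1,2\}$, $b\in\N$ and $2a+b\le d$ as in Lemma~\ref{numlem}; if $d=3$ (so $e\ge 4$) I instead fix $L\in|\bO_Y(1,0)|$ and decompose $e+1=2a+3b$. Writing the argument for $R$, I place $a$ tiles $Y_1,\dots,Y_a$ on $R$ whose long side is not $R$ (so $\ell(Y_i\cap R)=2$ and, by Remark~\ref{gtilep1p1}, $\Res_R Y_i$ is a length-$2$ jet $J_i$ sticking out of $R$ along the long side of $Y_i$), $b$ tiles $Z_1,\dots,Z_b$ on $R$ having $R$ as long side (so $\ell(Z_j\cap R)=3$ and $\Res_R Z_j$ is a point $Q_j$), and a general union $W$ of the remaining $s-a-b$ tiles away from $R$; since $\min\{d,e\}\ge 3$ one checks $s\ge a+b$, so there are enough tiles. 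Then $\ell(X\cap R)=2a+3b=d+1=h^0(\bO_R(d))$, where $\bO_Y(d,e)|_R\cong\bO_{\P^1}(d)$, so the length-$(d+1)$ scheme $X\cap R\subset R\cong\P^1$ imposes independent conditions and the restriction term $h^0(R,\bI_{X\cap R,R}(d))$ vanishes; the residual sequence of Remark~\ref{horace} then reduces the computation to $h^0(\bI_{\Res_R X}(d,e-1))$.

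To finish I would clean up $\Res_R X=J_1\cup\dots\cup J_a\cup\{Q_1,\dots,Q_b\}\cup W$ by a flat-limit move exactly as in Theorem~\ref{tiles}: replacing each jet $J_i$ by the jet $\tilde J_i\subset R$ supported at the same point, I obtain a specialisation $X'=\tilde J\cup S'\cup W$ with $\tilde J=\tilde J_1\cup\dots\cup\tilde J_a$ and $S'=\{Q_1,\dots,Q_b\}$. By semicontinuity together with the trivial lower bound $h^0(\bI_{\Res_R X}(d,e-1))\ge\max\{0,(d+1)(e+1)-4s\}$ it suffices to prove $h^0(\bI_{X'}(d,e-1))=\max\{0,(d+1)(e+1)-4s\}$. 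Now $\tilde J\cup S'\subset R$ while $W\cap R=\emptyset$, so $\Res_R X'=W$, and the inductive hypothesis applied to the general union $W$ of $s-a-b$ tiles gives good postulation in degrees $(d,e-1)$ and $(d,e-2)$ (both of smaller $d+e$, or base cases). If $h^0(\bI_W(d,e-1))=0$ then $h^0(\bI_{X'}(d,e-1))=0$ and we are done; otherwise $h^1(\bI_W(d,e-1))=0$ and I split on $W$ in degree $(d,e-2)$. When $h^0(\bI_W(d,e-2))=0$ the restriction $\rho\colon H^0(\bI_W(d,e-1))\to H^0(R,\bO_R(d))$ is injective, and since $\tilde J\cup S'$ is a general curvilinear scheme of length $2a+b$ on $R\cong\P^1$, Remark~\ref{cm} gives $\dim\im(\rho)(-\tilde J-S')=\max\{0,h^0(\bI_W(d,e-1))-(2a+b)\}=\max\{0,(d+1)(e+1)-4s\}$; when instead $h^1(\bI_W(d,e-2))=0$, the inequality $\ell(X'\cap R)=2a+b\le d$ forces $h^1(R,\bI_{X'\cap R,R}(d))=0$, and the residual sequence yields $h^1(\bI_{X'}(d,e-1))=0$ and hence the desired equality.

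The main obstacle is the single pair $(3,3)$, where $\min\{d,e\}=3$ makes Lemma~\ref{numlem} unavailable in the favourable form $2a+b\le d$ (the only decomposition is $a=2,b=0$, giving $2a+b=4>3$). Here, however, one necessarily has $s=4$, so $W$ is a general union of $2$ tiles, for which the base case gives $h^0(\bI_W(3,1))=0$; this places us in the first subcase above, where the inequality $2a+b\le d$ is never used, and the argument goes through. I expect verifying that $(3,3)$ always lands in this first subcase, and checking the bookkeeping that every target degree encountered is either a strictly smaller value of $d+e$ or a base case covered by Lemma~\ref{tilesprodd<3}, to be the most delicate point of the write-up.
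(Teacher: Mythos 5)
Your proposal is correct and follows essentially the same route as the paper: a Horace induction peeling off a ruling, with the same specialisation ($a$ tiles meeting the ruling in length $2$, $b$ tiles with the ruling as long side, $W$ general off the ruling), the same flat-limit degeneration of the residual $2$-jets into the ruling, and the same two subcases driven by $h^0(\bI_W)$ versus $h^1(\bI_W)$ one degree lower. The only differences are bookkeeping — the paper inducts on $d$ alone with $e\ge d$ and always decomposes $e+1$, handling small $s$ by a separate case rather than reducing to the critical values — and your explicit check that $(3,3)$ falls into the first subcase is a welcome point that the paper's uniform argument passes over silently.
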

	\proof\, Fix $(d,e)\in\N_{>0}^2$ and suppose by induction that the statement is true for any $(d',e)\in\N_{>0}^2$ with $1\leq d'<d$;  moreover, withouth loss of generality, we can suppose that $e\geq d$ and, by Lemma \ref{tilesprodd<3}, that $d\geq 3$. Fix a line $L\in|\bO_Y(1,0)|$ and let $a,b$ as in Lemma \ref{numlem} such that $e+1=2a+3b$. We distinguish two cases:
	\begin{itemize}[leftmargin=*]
	\item Case $s\geq a+b$.\\
	Take $P_1,\dots,P_a,Q_1,\dots,Q_b\in L$ distinct points and  $$X=Z_1\cup\dots\cup Z_a\cup Z^\prime_1\cup\dots\cup Z^\prime_b\cup W$$
	where
	\begin{itemize}[leftmargin=*]
	\item $Z_i$ is supported at $P_i\in L$ and $L$ is not a long side of $Z_i$, for $i=1,\dots,a$;
	\item $Z^\prime_i$ is supported at $Q_i\in L$ and $L$ is a long side of $Z^\prime_i$ for $i=1,\dots,b$;
	\item $W$ is a general union of $s-a-b$ tiles away from $L$.
	\end{itemize}
	The residual exact sequence of $X$ with respect to $L$ gives
	$$h^0(\bI_X(d,e))=h^0(\bI_{\Res_LX}(d-1,e))$$
	$$h^1(\bI_X(d,e))=h^1(\bI_{\Res_LX}(d-1,e))$$
	and $\Res_L X=J_1\cup\dots\cup J_a\cup Q_1\cup\dots\cup Q_b\cup W$, where $\ell(J_i)=2$  and $J_i$ is a curvilinear scheme supported at $P_i$ and not contained in $L$. The inductive assumption gives that either $h^0(\bI_W(d-1,e))=0$ or $h^1(\bI_W(d-1,e))=0$ and either $h^0(\bI_W(d-2,e))=0$ or $h^1(\bI_W(d-2,e))=0$. If $h^0(\bI_W(d-1,e))=0$ then $h^0(\bI_X(d-1,e))=0$, thus we can suppose $h^1(\bI_W(d-1,e))=0$. We distinguish two subcases:
	\begin{itemize}[leftmargin=*]
	\item Subcase $h^0(\bI_W(d-2,e))=0$.\\
	In this case we have $h^0(\bI_{\Res_LX}(d-2,e))=0$. Remark \ref{comlem0} gives $h^0(\bI_{\Res_LX}(d-1,e))=0$ or $h^1(\bI_{\Res_LX}(d-1,e))=0$. Hence, the residual exact of $X$ with respect to $L$ gives $h^0(\bI_X(d,e))=0$ or $h^1(\bI_X(d,e))=0$.
	\item Subcase $h^1(\bI_W(d-2,e))=0$.\\
	We specialise $J_i$ to $\tilde{J_i}$ with $\tilde{J_i}$ a length 2 curvilinear scheme supported at $P_i$ and contained in $L$ and we call $T$ the scheme obtained by substituing $J_i$ by $\tilde{J}_i$ in $\Res_LX$. Lemma \ref{numlem} gives
	 $$\ell(T\cap L)=2a+b\leq e<e+1=h^0(L,\bO_L(d-1,e))$$ thus $h^1(\bI_{T\cap L,L}(d-1,e))=0$. Hence, the residual exact sequence of $T$ with respect to $L$ gives $h^1(\bI_T(d-1,e))=0$ and, by semicontinuity, $h^1(\bI_{\Res_LX}(d-1,e))=0$.
	\end{itemize}
	\item Case $s<a+b$.\\
	We set $b':=\min\{b,s\}$ and if $b'=s$ we set $a'=0$ while if $b'=b$ we set $a'=a+b-s<a$. We conclude in an analogous way of the case $s\geq a+b$.\prfend
	\end{itemize}
	\begin{lem}\label{fattilesp1p1<5}
	Let $Y=\P^1\times\P^1$ and let $X\subset Y$ be a general union of $r$ double points and $s$ tiles Then, for any $d\leq4$ and $e\in\N$
	$$h^0(\bI_X(d,e))=\max\{0,(d+1)(e+1)-3r-4s\}$$
	except for the cases $(d,e,r,s)=(2,2u,2u+1,0)$ and $(d,e,r,s)=(2u,2,2u+1,0)$ for any $u\in\N_{>0}$.
	\end{lem}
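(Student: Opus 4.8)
The plan is to follow verbatim the architecture of the analogous $\P^2$ statement, Lemma \ref{d<6}. First I would dispose of the two extreme cases. When $s=0$ the scheme $X$ is a general union of double points, and the equality together with the precise exceptional list $(2,2u,2u+1,0)$ and $(2u,2,2u+1,0)$ is exactly the classification of defective systems of double points on $\P^1\times\P^1$ recorded in \cite{laf,lp}; one checks that every exception in the statement has $s=0$. When $r=0$ the statement is Theorem \ref{tilesprod}. Hence from now on I may assume $r\ge 1$ and $s\ge 1$, in which case no exception survives and the clean equality
$$h^0(\bI_X(d,e))=\max\{0,(d+1)(e+1)-3r-4s\}$$
is to be proved for all $e\in\N$ and all $d\in\{1,2,3,4\}$.

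By the symmetry of $\P^1\times\P^1$ exchanging the two factors I may assume $e\ge d$, and by Remark \ref{rangè}, applied once in $r$ and once in $s$, it suffices to treat the values of $(r,s)$ for which the total length $3r+4s$ brackets $(d+1)(e+1)$ from below and from above. I would then fix $d\in\{1,2,3,4\}$ and argue by induction on $e$, the base case $e=d$ being the symmetric bidegree $(d,d)$, which I verify directly. For the inductive step I fix a ruling $R\in|\bO_Y(0,1)|$, so that $\bO_Y(d,e)(-R)=\bO_Y(d,e-1)$ while $\bO_Y(d,e)_{|R}=\bO_{\P^1}(d)$ has only $d+1$ sections. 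I specialise a controlled subfamily of the double points and tiles of $X$ onto $R$: using the tile geometry of Remark \ref{gtilep1p1}, a tile placed on $R$ contributes length $2$ or $3$ to $X\cap R$ according to whether $R$ is or is not a long side, with residual a point or a length $2$ curvilinear scheme, while a double point $2P$ with $P\in R$ contributes length $2$ with residual the simple point $P$. Feeding this into the residual exact sequence
$$0\to \bI_{\Res_R X}(d,e-1)\to \bI_X(d,e)\to \bI_{X\cap R,R}(d)\to 0$$
reduces everything to the bidegree $(d,e-1)$ scheme $\Res_R X$, a general union of fewer double points, fewer tiles, and a few residual points and jets, to which the inductive hypothesis together with Remark \ref{cm} and Remark \ref{comlem0} apply.

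As in the proofs of Theorems \ref{tilesprod} and \ref{fattiles}, the inductive step then splits according to whether the off-$R$ part $W$ of $X$ satisfies $h^0(\bI_W(d,e-1))=0$, in which case $h^0(\bI_X(d,e))=0$ and we are done, or $h^1(\bI_W(d,e-1))=0$, in which case I distinguish the two subcases $h^0(\bI_W(d,e-2))=0$ and $h^1(\bI_W(d,e-2))=0$: in the first the restriction to $R$ is injective and I count conditions with Remark \ref{cm} and Remark \ref{comlem0}, in the second I verify $h^1(R,\bI_{X\cap R,R}(d))=0$ from the length bound $\ell(X\cap R)\le d+1$ and conclude through the residual sequence.

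The numerical bookkeeping is the real obstacle. Because the boundary system $\bO_{\P^1}(d)$ on $R$ carries only $d+1\le 5$ sections, only very few schemes can be pushed onto $R$ at each step, so the partition of the $3r+4s$ conditions into an on-$R$ part and an off-$R$ part must be chosen by hand in each residue class of $e$, mimicking the role of Lemma \ref{numlem}; I must check in every instance that $\ell(X\cap R)\le d+1$, so that $h^1$ on $R$ vanishes, and simultaneously that $\Res_R X$ is a genuinely general, non-defective configuration at bidegree $(d,e-1)$. The delicate point is to guarantee that the specialisation never drives $\Res_R X$ into one of the double-point exceptional families of \cite{laf,lp}; this is most constrained for $d=4$, where the boundary degree is largest and the configuration lies closest to the genuine exception $(4,2,5,0)$, so I expect the bulk of the case analysis to concentrate there.
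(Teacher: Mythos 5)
Your plan is essentially the paper's proof: the same reduction to $r\ge 1$, $s\ge 1$ via \cite{laf,lp} and Theorem \ref{tilesprod}, then induction on $e$ for each fixed $d\le 4$ using the residual exact sequence with respect to a ruling $R\in|\bO_Y(0,1)|$ and a specialisation of a few components onto $R$. Two remarks on the execution you defer: the bookkeeping is much lighter than you anticipate, since for each $d$ the paper specialises onto $R$ a single fixed configuration of trace length exactly $d+1$ (one double point for $d=1$; one tile with $R$ as long side for $d=2$; one double point plus one tile with, respectively without, $R$ as long side for $d=4$, resp.\ $d=3$), uniformly in $e$; and the interference with the defective double-point systems occurs not at $d=4$ but at $d=2$ with $s=1$ and $e$ odd, where the residual configuration is $r$ double points in bidegree $(2,e-1)$ with $e-1$ even, a case the paper settles by a direct check of $r\in\{e-1,e\}$ against \cite{lp} Theorem 3.1.
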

	\proof\, If $s=0$ the result follows by \cite{lp} Theorem 3.1 and by \cite{laf} Table I, while if $r=0$ the result follows by Theorem \ref{tilesprod}, so, if needed, we can assume $r\geq 1$ and $s\geq 1$. Fix a point $P\in Y$ and let $L$ the unique element of $|\bO_Y(1,0)|$ passing through $P$ and $R$ the unique element of $|\bO_Y(0,1)|$ passing through $P$. Without loss of generality we can assume $e\geq d$.
	\begin{itemize}[leftmargin=*]
	\item Case $d=1$.\\
	If $e=1$ it is enough to check the good postulation of one tile and this follows by Theorem \ref{tilesprod}. Now take $e>1$ and suppose by induction that the statement is true for $d=1$ and any $e'<e$. Since $r\geq 1$ we can take $X=2P\cup X^\prime$ with $X^\prime$ a general union of $r-1$ double points and $s$ tiles away from $R$. The statement follows by the residual exact sequence of $X$ with respect to $R$ and the inductive hypothesis.
	\item Case $d=2$.\\
	If $e=1$ is enough to check the statement for a general union of one double point and one tiles, and this is straightforward. Since $s\geq 1$ we can suppose that $X=Z\cup X^\prime$ with $Z$ a tile having $R$ as one of its long sides and $X^\prime$ a general union of $r$ double points and $s-1$ tiles away from $R$. The residual exact sequence of $X$ with respect to $R$ gives
	$$h^0(\bI_X(2,e))=h^0(\bI_{\Res_RX}(2,e-1))$$
	with $\Res_RX=P\cup X^\prime$. If $s>1$ or if $s=1$ and $e$ is even then, by inductive hypotheis, $X'$ has good postulation with respect to $\bO_Y(2,e-1)$ and we can conclude. So we are left we the case $s=1$ and $e$ odd. In this case, by Remark \ref{rangè} it suffices to check the cases $r=e-1$ and $r=e$. In both cases, by \cite{lp} Theorem 3.1, it is immediate to show that the statement holds.
	\item Case $d=3$.\\
	As for the previous cases, this one can be proved by induction on $e$. For the base case $e=1$ it suffices to check it for the general union of two tiles, and this immediate by Theorem \ref{tilesprod}. For the inductive argument it is enough to take $X=2P\cup Z\cup X^\prime$, with $Z$ supported on a point of $R\setminus {P}$ and not having $R$ as one of its long sides. Then we can use the same argument of previous cases.
	\item Case $d=4$.\\
	Again we can use induction on $e$. For the base case $e=1$ it is enough to consider a general union of two double points and a tile and it is easy to check the statement in this case. Finally, for the inductive argument we proceed as in the previous case but with $Z$ specialised such that $R$ is one of its long sides. \prfend
	\end{itemize}
	\begin{thm}\label{fattilesp1xp1}
	Let $Y=\P^1\times\P^1$ and let $X\subset Y$ a general union of $r$ double points and $s$ tiles. Then
	$$h^0(\bI_X(d,e))=\max\{0,(d+1)(e+1)-3r-4s\}$$
	except for the cases $(d,e,r,s)=(2,2u,u+1,0)$ and $(d,e,r,s)=(2u,2,u+1,0)$ for any $u\in\N_{>0}$.
	\end{thm}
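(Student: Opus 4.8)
The plan is to transcribe, essentially line by line, the argument of Theorem \ref{fattiles}, replacing the single degree $d$ of $\P^2$ by the bidegree $(d,e)$ of $Y=\P^1\times\P^1$ and the rôle of Theorem \ref{tiles} by that of Theorem \ref{tilesprod}. Since $Y$ carries the involution exchanging the two rulings, I may assume $e\geq d$. As in Lemma \ref{fattilesp1p1<5}, the case $s=0$ is exactly the classification of defective linear systems of double points recorded in \cite{lp} and \cite{laf} (this is the source of the listed exceptions), while the case $r=0$ is Theorem \ref{tilesprod}; hence I reduce at once to $r\geq 1$ and $s\geq 1$. For fixed $(d,e,s)$ I set
$$r_*:=\left\lfloor\frac{(d+1)(e+1)-4s}{3}\right\rfloor,\qquad r^*:=\left\lceil\frac{(d+1)(e+1)-4s}{3}\right\rceil,$$
and Remark \ref{rangè} lets me restrict to $r\in\{r_*,r^*\}$. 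Lemma \ref{fattilesp1p1<5} disposes of $d\leq 4$, so I argue by induction on $d$, the inductive step being $d\geq 5$; throughout $e\geq d\geq 5$.

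For the inductive step I fix a line $L\in|\bO_Y(1,0)|$; restriction to $L\cong\P^1$ carries $\bO_Y(d,e)$ to $\bO_{\P^1}(e)$, so $L$ absorbs length $e+1$, and the residual exact sequence with respect to $L$ lowers the bidegree to $(d-1,e)$. Applying Lemma \ref{numlem} to $e$ (legitimate since $e\geq 5$) I write $e+1=2a+3b$ with $a\in\{0,1,2\}$ and $b>0$. When $s\geq a+b$ I specialise exactly as in Theorem \ref{tilesprod}: I place $a$ tiles at points of $L$ for which $L$ is not a long side and $b$ tiles at points of $L$ for which $L$ is a long side, so that $\ell(X\cap L)=2a+3b=e+1$, keeping the remaining $r$ double points and $s-a-b$ tiles in a scheme $W$ general away from $L$. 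The inductive hypothesis applied to $W$ at bidegrees $(d-1,e)$ and $(d-2,e)$ gives vanishing of $h^0$ or $h^1$, and the two subcases $h^0(\bI_W(d-2,e))=0$ versus $h^1(\bI_W(d-2,e))=0$ are closed just as in Theorem \ref{tilesprod}, using Remark \ref{comlem0} in the first and the bound $\ell(T\cap L)\leq e$ coming from Lemma \ref{numlem} in the second (after specialising the residual jets into $L$).

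The genuinely new bookkeeping is the case $s<a+b$, where there are not enough tiles to saturate $L$ and I must fill $L$ with double points, mirroring the three-subcase analysis of the last case of Theorem \ref{fattiles} but with the counting run on $e$ instead of $d$. If $e$ is odd I put $(e+1)/2$ double points on $L$; if $e$ is even I put one tile with long side $L$ together with $(e-2)/2$ double points; in either case $\ell(X\cap L)=e+1$ and the residual at bidegree $(d-1,e)$ is a set of points union a general $W$ to which the induction applies. These two subcases are governed by $2r_*\geq e+1$ and $2r_*\geq e$ respectively, and the only remaining possibility $2r_*\leq e-1$ must be excluded by a numerical contradiction assembled from the inequalities of Lemma \ref{numlem}, the condition $s<a+b$, and the estimate $3r+4s\geq(d+1)(e+1)-2$ valid for $r\in\{r_*,r^*\}$, exactly as in the final subcase of Theorem \ref{fattiles}. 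I expect this contradiction estimate, together with the discipline of keeping $\ell(X\cap L)$ equal to $e+1$ in every parity subcase, to be the main obstacle; everything else is a direct translation of the $\P^2$ arguments. Finally one checks that the recursion never lands on an exceptional pair: since $d-1\geq 4$, $d-2\geq 3$ and $e\geq 5$, neither $(d-1,e)$ nor $(d-2,e)$ can be of the form $(2,2u)$ or $(2u,2)$, so no exceptions are manufactured and the induction closes.
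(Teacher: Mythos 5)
Your proposal is correct and its skeleton coincides with the paper's: the same reductions (to $r\geq 1$, $s\geq 1$, $e\geq d$, $r\in\{r_*,r^*\}$, and $d\geq 5$ via Lemma \ref{fattilesp1p1<5}), the same induction on $d$ with specialisation onto a ruling $L\in|\bO_Y(1,0)|$ governed by $e+1=2a+3b$ from Lemma \ref{numlem}, and an identical treatment of the case $s\geq a+b$, including the observation that $(d-1,e)$ and $(d-2,e)$ never land on the exceptional list. The one genuine divergence is the case $s<a+b$. The paper dispatches it in two lines by shrinking the number of tiles placed on $L$ (setting $b'=\min\{b,s\}$ and a corresponding $a'$) and asserting that one concludes ``analogously''; you instead transplant the three-subcase parity analysis of Theorem \ref{fattiles}, saturating $L$ to length exactly $e+1$ with double points (plus one tile with long side $L$ when $e$ is even) and killing the leftover range by a numerical contradiction. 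Your route is arguably the more robust of the two: when $s$ is small and $r$ is large (say $s=1$ and $e+1\geq 6$), tiles alone cannot bring $\ell(X\cap L)$ up to $e+1$, so the residual at bidegree $(d-1,e)$ is too long for the Horace step to close unless double points are moved onto $L$ --- which is exactly what your subcases do and what the paper's wording leaves implicit. The only piece you leave as an expectation rather than a verification is the final contradiction; it does close: from $2r_*\leq e-1$ one gets $3r\leq \tfrac{3}{2}(e+1)$, from $s<a+b$ and $a\leq 2$ one gets $4s<\tfrac{4}{3}(e+1)+8$, and combining with $3r+4s\geq (d+1)(e+1)-2\geq 6(e+1)-2$ yields $\tfrac{19}{6}(e+1)<10$, i.e.\ $e\leq 2$, against $e\geq d\geq 5$. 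I would write that inequality out rather than defer it, since it is the step that guarantees the three subcases exhaust all possibilities.
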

	\proof\,  By the same argument of Lemma \ref{fattilesp1p1<5} we can assume $r\geq 1$ and $s\geq 1$. Without loss of generality we can suppose $e\geq d$. For any $d,e,s\in\N$ set
	$$r_*(d,e,s):=\left\lfloor\frac{(d+1)(e+1)-4s}{3}\right\rfloor,\quad r^*(d,e,s):=\left\lceil\frac{(d+1)(e+1)-4s}{3}\right\rceil.$$
	By Remark \ref{rangè}, given  $(d,e,r,s)$ we can suppose $r\in\{r_*(d,e,s),r^*(d,e,r,s)\}$ and, by Lemma \ref{fattilesp1p1<5} we can suppose $d\geq 5$. Moreover, by the semicontinuity theorem for cohomology it is enough to prove the statement for a specialisation of $X$. Now fix $d$ and suppose by induction that the statement is true for any $(d',e)$ with $d'<d$. Let $a,b\in\N$ as in Lemma \ref{numlem} such that $e+1=2a+3b$ and fix a line $L\in|\bO_Y(1,0)|$. We destinguish now 2 cases:
	\begin{itemize}[leftmargin=*]
		\item Case $s\geq a+b$.\\
		In this case it is enough to prove the statement for $X=X_1\cup X_2$ where $X_1$ is a set of $a+b$ tiles specialised on $L$ as in the proof of Theorem \ref{tiles} and $X_2$ is a general union of $r$ double points and $s-a-b$ tiles. The inductive assumption gives that $h^0(\bI_{X_2}(d-1,e))=0$ or $h^1(\bI_{X_2}(d-1,e))=0$ and $h^0(\bI_{X_2}(d-2,e))=0$ or $h^1(\bI_{X_2}(d-2,e))=0$: indeed, note that the only exceptional cases appear when $(d,e)=(2,2u)$ or $(d,e)=(2u,2)$ for some $u\in\N_{>0}$, but we are assuming $d\geq5$ and $e\geq d$, so the exception cases cannot appear. Hence, we can proceed as in the proof of Theorem \ref{tilesprod} and this concludes this case.
		\item Case $s<a+b$.\\
		We set $b':=\min\{b,s\}$ and if $b'=s$ we set $a'=0$ while if $b'=b$ we set $a'=a+b-s<a$. We conclude in an analogous way of the case $s\geq a+b$. 	\prfend
	\end{itemize}
	As an immediate consequence of the previous theorem, we have the following corollary.
	\begin{cor}\label{p1p1<4}
	Let $d\in\N$ and $X\subset\P^1\times\P^1$ a general union of 2-squares, double points, tiles and curvilinear schemes of length less or equal than 4. Then
	$$h^0(\bI_X(d,e))=\max\left\{0,(d+1)(e+1)-\ell(X)\right\}$$
	except for the cases already listed in Theorem \ref{fattilesp1xp1}.
	\end{cor}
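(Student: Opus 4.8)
The plan is to follow closely the strategy behind Corollary \ref{p2<4}, reducing everything to the double-point-and-tile case settled in Theorem \ref{fattilesp1xp1}. By Remark \ref{brirem} every connected zero-dimensional scheme of length at most $4$ supported at a regular point is a curvilinear scheme, a double point, a 2-square or a tile, so I may write $X = W \cup C$, where $W$ collects the 2-squares, double points and tiles of $X$ and $C$ collects the curvilinear components. Set $\bL = \bO_Y(d,e)$ and $N = h^0(Y,\bL) = (d+1)(e+1)$; recall that $h^0(\bI_X \otimes \bL) \ge \max\{0, N - \ell(X)\}$ always holds, so only the reverse inequality is at issue.

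First I would dispose of the 2-squares. Let $Y_1,\dots,Y_s$ be the 2-squares of $X$, supported at the general points $P_1,\dots,P_s$, and for $0 \le j \le s$ let $Z_j$ be the scheme obtained from $W$ by replacing $Y_{j+1},\dots,Y_s$ with the double points $2P_{j+1},\dots,2P_s$; thus $Z_0$ is a general union of double points and tiles while $Z_s = W$. Writing $W_j$ for $Z_{j-1}$ with the component $2P_j$ deleted and applying Lemma \ref{2p3p} with $V = H^0(\bI_{W_j} \otimes \bL)$ at the point $P_j$, one gets $h^0(\bI_{Z_j}\otimes\bL) = \max\{0,\,h^0(\bI_{Z_{j-1}}\otimes\bL) - 1\}$ for every $j$; iterating yields
$$h^0(\bI_W \otimes \bL) = \max\{0,\,h^0(\bI_{Z_0}\otimes\bL) - s\}.$$
Now Theorem \ref{fattilesp1xp1} evaluates $h^0(\bI_{Z_0}\otimes\bL)$, and a two-line check on $\max\{0,\cdot - s\}$ shows that good postulation of $Z_0$ forces good postulation of $W$, using only that each 2-square has one more unit of length than the double point replacing it, so that $\ell(W) = \ell(Z_0) + s$.

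It then remains to reattach $C$. Taking $V = H^0(\bI_W\otimes\bL)$ and invoking Remark \ref{cm} for the general curvilinear union $C$ gives $h^0(\bI_X\otimes\bL) = \dim V(-C) = \max\{0,\,h^0(\bI_W\otimes\bL) - \ell(C)\}$; together with the displayed identity and $\ell(X) = \ell(Z_0) + s + \ell(C)$, this produces $h^0(\bI_X\otimes\bL) = \max\{0,\,N - \ell(X)\}$ whenever $Z_0$ is non-defective.

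The delicate point, and the one I expect to be the main obstacle, is the bookkeeping of the exceptions. By Theorem \ref{fattilesp1xp1} the scheme $Z_0$ can fail good postulation only when it is a pure union of double points lying in one of the listed defective families on $\bO_Y(2,2u)$ or $\bO_Y(2u,2)$; in each of these the double points fill the system exactly, so that $\ell(Z_0) = N$ and $h^0(\bI_{Z_0}\otimes\bL) = 1$. The heart of the argument is that this single unit of defect is absorbed as soon as $X$ carries any extra structure. Indeed, if $X$ has a 2-square then $s \ge 1$ and the display gives $h^0(\bI_W\otimes\bL) = \max\{0,1-s\} = 0$; if $X$ has a curvilinear component then $\ell(C)\ge 1$ and the last step gives $h^0(\bI_X\otimes\bL) = \max\{0,1-\ell(C)\} = 0$; and if $X$ has a tile then $Z_0$ already contains a tile and hence lies outside the defective families altogether. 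In each of these situations $\ell(X) > N$, so the good-postulation value is $0$ and the defect cannot survive. Consequently the only genuine exceptions are the pure double-point configurations already isolated in Theorem \ref{fattilesp1xp1}, which is exactly the content of the corollary.
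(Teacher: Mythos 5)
Your proof is correct and takes exactly the same route as the paper, whose own proof of this corollary is the one-line invocation of Remark \ref{cm}, Lemma \ref{2p3p} and Theorem \ref{fattilesp1xp1}. Your write-up is simply the careful unwinding of that reduction (replacing each 2-square by a double point via Lemma \ref{2p3p}, attaching the curvilinear part via Remark \ref{cm}), together with the bookkeeping showing that the single unit of defect in the exceptional double-point configurations is absorbed by any additional component.
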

		\proof\, It follows immediately by Remark \ref{cm}, Lemma \ref{2p3p} and Theorem \ref{fattilesp1xp1}. \prfend
	To conclude, we give now an example of union of tiles in $\P^1\times\P^1$ having bad postulation.
		\begin{exa}\label{new3}
		Fix integers $t\ge 2$ and integers $d\ge e\ge 2$. Take $L\in |\bO_{\P^1\times \P^1}(1,0)|$. Fix $S\subset L$ a set of $t$ distinct points and let $B\subset \P^1\times \P^1$ be a union of $t$ distinct tiles supported at the $t$ points of $S$ and having $L$ as one of their long sides. We have
		$\ell(B\cap L)=3t$. Using twice the residual exact sequence with respect to $L$ we see that  $h^1(\bI_B(d,e)) >0$ if and only if $e\le 3t-2$ and that, if $t-1\le e\le 3t-2$, then we have $h^1(\bI_B(d,e)) =3t-1-e$.
		We leave to the interested reader the extension of Proposition \ref{new2} to the case $\P^1\times \P^1$ when $d\ge e+2$. If $d=e$ one has also to consider as long sides the elements of $|\bO_{\P^1\times\P^1}(0,1)|$. 
	\end{exa}	

\end{document}